\newtheorem{thm}{Theorem}[section]
\newtheorem{cor}[thm]{Corollary}
\theoremstyle{definition}
\newcommand{\scr}[1]{\mathscr #1}
\definecolor{wco}{rgb}{0.5,0.2,0.3}
\numberwithin{equation}{section} \theoremstyle{remark}
\newcommand{\ua}{\uparrow}
\def\l{\left} \def\r{\right}
\title{{\bf  Probability Versions of Li-Yau Type Inequalities and Applications  }\footnote{Supported in
 part by   the National Key R\&D Program of China (No. 2022YFA1006000, 2020YFA0712900) and NNSFC (11921001).} }
 \author[1]{Li-Juan Cheng}  \author[2]{Feng-Yu Wang}
\affil[1]{\small School of Mathematics, Hangzhou Normal
  University,\par
  Hangzhou 311121, People's Republic of China\par
  \texttt{lijuan.cheng@hznu.edu.cn}\vspace{1em}}
   \affil[2]{\small Center for Applied Mathematics, Tianjin
      University,\par Tianjin 300072, People's Republic of China \par
      \texttt{wangfy@tju.edu.cn}}
\begin{document}
\allowdisplaybreaks

\def\R{\mathbb R}  \def\ff{\frac} \def\ss{\sqrt} \def\B{\mathbf
B}
\def\N{\mathbb N} \def\kk{\kappa} \def\m{{\bf m}}
\def\ee{\varepsilon}\def\ddd{D^*}
\def\dd{\delta} \def\DD{\Delta} \def\vv{\varepsilon} \def\rr{\rho}
\def\<{\langle} \def\>{\rangle}
  \def\nn{\nabla} \def\pp{\partial} \def\E{\mathbb E}
\def\d{\text{\rm{d}}} \def\bb{\beta} \def\aa{\alpha} \def\D{\scr D}
  \def\si{\sigma} \def\ess{\text{\rm{ess}}}\def\s{{\bf s}}
\def\beg{\begin} \def\beq{\begin{equation}}  \def\F{\scr F}
\def\Ric{\mathcal Ric} \def\Hess{\text{\rm{Hess}}}
\def\e{\text{\rm{e}}} \def\ua{\underline a} \def\OO{\Omega}  \def\oo{\omega}
 \def\tt{\tilde}\def\[{\lfloor} \def\]{\rfloor}
\def\cut{\text{\rm{cut}}} \def\P{\mathbb P} \def\ifn{I_n(f^{\bigotimes n})}
\def\C{\scr C}      \def\aaa{\mathbf{r}}
\def\gap{\text{\rm{gap}}} \def\prr{\pi_{{\bf m},\nu}}
\def\Z{\mathbb Z} \def\vrr{\nu} \def\ll{\lambda}
\def\L{\scr L}\def\Tt{\tt} \def\TT{\tt}\def\II{\mathbb I}
\def\i{{\rm in}}\def\Sect{{\rm Sect}}  \def\H{\mathbb H}
\def\M{\mathbb M}\def\Q{\mathbb Q} \def\texto{\text{o}} \def\LL{\Lambda}
\def\Rank{{\rm Rank}} \def\B{\scr B} \def\i{{\rm i}} \def\HR{\hat{\R}^d}
\def\to{\rightarrow} \def\gg{\gamma}
\def\EE{\scr E} \def\W{\mathbb W}
\def\A{\scr A} \def\Lip{{\rm Lip}}\def\S{\mathbb S}
\def\BB{\scr B}\def\Ent{{\rm Ent}} \def\i{{\rm i}}\def\itparallel{{\it\parallel}}
\def\g{{\mathbf g}}\def\Sect{{\mathcal Sec}}\def\T{\mathcal T}\def\BB{{\bf B}}
\def\f\ell \def\g{\mathbf g}\def\BL{{\bf L}}  \def\BG{{\mathbb G}}
\def\Bd{{D^E}} \def\BdP{D^E_\phi} \def\Bdd{{\bf \dd}} \def\Bs{{\bf s}} \def\GA{\scr A}
\def\Bg{{\bf g}}  \def\Bdd{\psi_B} \def\supp{{\rm supp}}\def\div{{\rm div}}
\def\ddiv{{\rm div}}\def\osc{{\bf osc}}\def\1{{\bf 1}}\def\BD{\mathbb D}
\def\H{{\bf H}}\def\gg{\gamma} \def\n{{\mathbf n}}\def\GG{\Gamma}\def\HAT{\hat}
\def\SU{{\bf SU}}\def\Ric{{\rm Ric} }
\maketitle

\begin{abstract}  By using stochastic analysis, two  probability versions of Li-Yau  type inequalities are established for diffusion semigroups on a manifold possibly with   (non-convex) boundary. The inequalities are explicitly given by  the Bakry-Emery  curvature-dimension,  as well as the lower bound   of the second fundamental form if the boundary exists.
As applications,   a number of global and local   estimates are presented, which  extend  or improve existing ones derived   for manifolds without boundary.
Compared with the maximum principle technique developed in the literature, the probabilistic argument we used is more straightforward and hence considerably simpler.
\end{abstract} \noindent
 AMS subject Classification:\ 58J65, 60H30.   \\
\noindent
 Keywords: Riemannian manifold, Li-Yau type inequality, dimension-curvature, second fundamental form, martinagle.

 \vskip 2cm

 \section{Introduction}
Since Li-Yau  \cite{Li-Yau} established their famous parabolic Harnack inequality for the heat semigroup  on Riemannian manifolds, 
 a number of  extensions and refinements   have been intensively made in the literature, which   will be briefly recalled latter on.

The purpose of this paper is to provide probability versions of Li-Yau type inequalities for the  diffusion semigroups   on a complete Riemannian manifold  possibly with a Neumann boundary, 
which are formulated by  expectations on   functionals of the corresponding diffusion process, which are  explicitly given by the Bakry-Emery curvature-dimension of the generator,   the second fundamental form of the boundary if exists, and   an adapted process $\ell_s$ satisfying $\ell_0=1$ and $\ell_t=1$,  see Theorem \ref{T1}. With specific  choices of the reference adapted process $\ell_s$, these inequalities imply new explicit gradient estimates on the heat semigroup, see Corollary \ref{C1} for global estimates and  Corollary \ref{C2} for local estimates.
 
 Compared with the maximum principle technique developed from \cite{Li-Yau} and adopted in   substantial  references,   the martingale argument we used here considerably simplify the proof. The main idea of the study comes from   Arnaudon-Thalmaier \cite{AT}, where some global and local gradient estimates on the heat semigroup is presented by using stochastic analysis on manifolds.

Before moving on, let us recall some existing results on Li-Yau type inequalities, which are   derived on manifolds without boundary, and in most cases for the Laplacian without drift.
See \cite{W1,W2} for extensions to manifolds with boundary.

Let $M$ be an $m$-dimensional connected complete Riemannian manifold without boundary,   let $L:=\DD+Z $ for some vector field  $ Z.$  Assume that
for some constants $n\ge m$ and   $K\in \R$ the following  Bakry-Emery curvature-dimension condition holds:
\beq\label{C}  \Ric_{Z}^{(n-m)}:=\Ric_{Z}-\ff{Z\otimes Z} {n-m}\ge K,\end{equation}
where $\Ric_Z:= \Ric-\nn Z$ is the Bakry-Emery curvature, and $\Ric$ is the Ricci curvature. This condition
  is equivalent to
\begin{align}\label{CD-L}
\frac{1}{2}L|\nabla f|^2-\langle \nabla Lf, \nabla f \rangle\ge K|\nabla f|^2+\frac{1}{n}(Lf)^2,\quad f\in C^{\infty}(M).
\end{align}
When $Z=0$ we may take $n=m$ in \eqref{C}, so that this condition reduces to $\Ric\ge K.$

 Consider a bounded positive solution to the heat equation
$$\pp_t u_t= L u_t,\ \ t\ge 0.$$
Li-Yau  \cite{Li-Yau}  proved   that  when  $Z=0$ and $\Ric\ge K$ for some constant $K$,
$$
\frac{|\nabla u_t|^2}{u_t^2}\le \alpha \frac{\DD u_t}{u_t}+ \frac{n K^-\alpha^2 }{2(\alpha-1)}+\frac{n\alpha^2}{2t},\ \ t>0,\aa>1,
$$ where $K^-:= \max\{-K,0\}$ is the negative part of $K$.
In particular, when $K=0$ (i.e. $\Ric\ge 0$) with $\aa\downarrow 1$,  this implies
$$\frac{|\nabla u_t|^2}{u_t^2}\le   \frac{\DD u_t}{u_t}+  \frac{n}{2t},$$
where the equality holds for $u_t$ being the standard heat kernel   on $M=\R^n$.

The above Li-Yau  inequality has been extensively extended or refined.
For instances,   by  Davies  \cite{Davies} (for $Z=0$)
\begin{align}\label{LY1}
\frac{|\nabla u_t|^2}{u_t^2}\le \alpha \frac{\DD u_t}{u_t}+\frac{nK^-\alpha ^2 }{4(\alpha -1)}+\frac{n\alpha^2}{2t},\ \ \aa>1, t>0;
\end{align}
  by  Yau \cite{Yau95} (for $Z=0$)
$$\frac{|\nabla u_t|^2}{u_t^2}\le   \frac{\DD u_t}{u_t}+ \sqrt{2nK^-}\sqrt{\frac{|\nabla u_t|^2}{u_t}+\frac{n}{2t}+2nK^-}+\frac{n}{2t},\ \ t>0,$$
which is then improved      by Bakry-Qian  \cite[(6)]{B-Q}
 $$\frac{|\nabla u_t|^2}{u_t^2}\le   \frac{\DD u_t}{u_t}+ \sqrt{nK^-} \sqrt{\frac{|\nabla u_t|^2}{u_t^2}+\frac{n}{2t}+\frac{nK^-}{4}}+\frac{n}{2t},$$ by    \cite[(54)]{B-Q}
\beq\label{LY3} \frac{|\nabla u_t|^2}{u_t^2}\le \l(1+\frac{2}{3}K^- t\r)\frac{\DD u_t}{u_t}+ \frac{n}{2t}+\frac{nK^-}{2}\l(1+\frac{1}{3}K^-t\r),\ \ t>0;
 \end{equation}
and more recently by Bakry-Bolley-Gentil \cite{BBG}  (also for $Z\ne 0$)
\beq\label{J'} \ff 4 {nK}\ff {Lu_t}{u_t}< 1+\ff{\pi^2}{K^2t^2},\end{equation}
\beq\label{BBG}
 \frac{|\nabla u_t|^2}{u_t^2}\le   \frac{L u_t}{u_t}- \frac{nK }{2}+ \frac{n}{2}\Phi_t\Big(1-\frac{4}{nK}\frac{L u_t}{u_t}\Big),\ \ t>0,\end{equation}
 where
$$\Phi_t(r):=\begin{cases}
K\sqrt{r}\coth(Kt\sqrt{r}),& \ \ \  \  r> 0,\\
\ff 1 t, & \ \ \  \  r= 0,\\
K\sqrt{-r}\cot(Kt\sqrt{-r}), &\  \  \  \   -\frac{\pi^2}{K^2 t^2}<r< 0.
\end{cases}
$$
Moreover,
Li-Xu  \cite{Li-Xu} proved (for $Z=0$)
\begin{align}\label{L-X-1}
\frac{|\nabla u_t|^2}{u_t^2}\le \l(1+\frac{\sinh(K^-t)\cosh(K^-t)-K^-t}{\sinh^2(K^-t)}\r)\frac{\DD u_t}{u_t}+ \frac{nK^-}{2}\Big(1+\coth(K^-t)\Big),\ \ t>0,
\end{align} see also Qian \cite{Qian14, Qian24} for conditions on functions
$a$ and $ c $ such that
\begin{align*}
\frac{|\nabla u_t|^2}{u_t^2}\le a(t)\frac{\DD u_t}{u_t}+ c(t),\ \ t>0.
\end{align*}
 All these inequalities are proved by using the technique of maximum principle developed in Li-Yau \cite{Li-Yau}.

  \

In the next section, we present two  probability versions of Li-Yau type inequalities for the  heat semigroup generated by $L:=\DD+Z $ with Neumann boundary if $\pp M$ exists, by using the  diffusion process $X_t$ generated by $L$ with reflecting boundary if $\pp M$ exists. The inequalities are explicitly given by $K$ and $n$ in \eqref{C}   for some constant $n\ge m$ and a function $K\in C(M)$, and the lower bound of the second fundamental form  of $\pp M$ if it exists.  As applications, besides extensions of existing estimates to the case with boundary,  some new global and local estimates are presented in Sections 3 and 4 respectively, where the curvature may be unbounded from below.

 \section{General results }

Let $M$ be an $m$-dimensional connected complete Riemannian manifold possibly with a boundary $\pp M$.
Let $L:=\DD+Z $ for some vector field  $ Z $  such that \eqref{C} holds for some constant $n\ge m$ and a function $K\in C(M)$.
When $\pp M$ exists, let $\si\in C(\pp M) $ be a lower bound of  the second fundamental form of $\pp M$,   i.e. the inward unit normal vector field $N$ of $\pp M$ satisfies
\beq\label{S} \II(v,v):= -\<\nn_v N,v\>\ge \si |v|^2,\ \ v\in T\pp M.\end{equation}

From now on, let $0\le u_0\in \D(L) \cap C_b^2(M)$ be positive with bounded $  u_0 +|Lu_0|$, and  $Nu_0|_{\pp M}=0$ if $\pp M$ exists. Let  $u: [0,\infty)\times M\to (0,\infty)$  solve
the following heat equation
\beq\label{H-equation} \pp_t u_t(x) = L u_t(x),\ \ Nu_t|_{\pp M}=0,\ \ t\ge 0, \ x\in M,\ \end{equation} where the Neumann boundary condition $Nu_t|_{\pp M}=0$   applies only when $\pp M$ exists.

Let $X_t$ be the  diffusion process generated by $L$ with reflecting boundary if $\pp M$ exists, which can be constructed as the unique solution to the following SDE on $M$:
\beq\label{SDE} \d X_t= Z(X_t)\d t+ \ss 2\, U_t\circ \d B_t+ N(X_t)\d\scr L_t,\end{equation}
where $B_t$ is the standard $m$-dimensional Brownian motion on a probability space $(\OO,\F,\P)$ with natural filtration $\{\F_t\}_{t\ge 0}$, $U_t$ is the horizontal lift
of $X_t$ to the frame bundle $O(M),$ and $\scr L_t$ is the local time of $X_t$ on $\pp M$ if exists, so that  $\scr L_t=0$ if $\pp M$ does not exist.
For any $x\in M$ let $\E^x$ denote the expectation taken for the diffusion   process  with initial value $X_0=x$. By \eqref{H-equation} and It\^o's formula, we have
\beq\label{XW} u_t(x)=\E^x[u_0(X_t)],\ \ Lu_t(x)= \E^x[L u_0(X_t)],\ \ t\ge 0, \,  x\in M.\end{equation}
In the following two subsections, we present a global probability version and a local probability version
of Li-Yau type inequalities for $u_t$ respectively.

 When $\pp M$ is either empty or convex (i.e. $\si=0$) so that $\si(X_s)\d\scr L_s=0$, and $K$ is a constant, by choosing deterministic $\ell_s$ with $\ell_0=1$ and $\ell_t=0$, and applying the equations in \eqref{XW},  the following estimate \eqref{J0} reduces to
\beq\label{WX0} \frac{|\nabla u_t|^2}{u_t^2} \le \ff n 2 \int_0^t |\ell_s'|^2 \e^{-2Ks}\d s  -   \ff{Lu_t}{u_t}  \int_0^t(\ell_s^2)'  \e^{-2Ks}  \, \d s,\ \ t>0,\end{equation}
which  has been proved  in \cite[Proposition 2.4]{BG11} (see also  \cite{Qian14})  by using   analytic  arguments.

\beg{thm}  \label{T1}
Assume      $\eqref{C}$  for some constant $n\ge m$ and a function $K\in C(M)$, and also $\eqref{S}$    for some   $\si\in C(\pp M)$ if $\pp M$ exists. Let $t>0, x\in M,$ and $(\ell_s)_{s\in [0,t]}$ be an  adapted real process such that $\ell_0=1,\ell_t=0$,   $\ell_s'$  exists   $\d s\times \P$-a.e. on $ [0,t]\times \OO,$ and
\beq\label{CV0'} \beg{split}& \E^x\bigg[ \sup_{s\in [0,t]}  \l(\ell_s^2 \e^{\int_0^s[K (X_r)^-\, \d r+\si(X_r)^-\, \d \scr L_r]} \frac{|\nn u_{t-s}|^2}{u_{t-s}}(X_s) \r)\bigg]\\
&\quad + \E^x\left[\int_0^t\big(|\ell'_s|^2 +\ell_s^2\big)\e^{-2\int_0^s[K (X_r)\, \d r+\si(X_r)\, \d \scr L_r]}\, \d s\right]<\infty.\end{split}
\end{equation}
Then
\beg{enumerate} \item[$(1)$]
\beq\label{J0} \beg{split}
\frac{|\nabla u_t|^2}{u_t}(x)
&\leq \frac{n}{2} \,\E^x\left[u_0(X_t)\int_0^{t} |\ell_s'|^2 \e^{-2\int_0^s \{K (X_r)\,\d r+\si(X_r)\,\d\scr L_r\}}\, \d s\right]  \\
&\quad -   \E^x \left[ (Lu_0)(X_t) \int_0^{t}(\ell_s^2)'  \e^{-2\int_0^s \{K (X_r)\,\d r+ \si(X_r)\d\,\scr L_r\}}  \, \d s \right];
\end{split}\end{equation}
\item[$(2)$]  Moreover, if $\ell_s$ is deterministic with $\ell_s'\le 0$ and $\sigma= 0$ (i.e. $\pp M$ is convex or empty), then
  for any $\aa\in (1,\infty)$ and any constant $K_0$ such that
 $K\geq K_0$,
\beq\label{A1} \begin{split}
& (1+\gamma_{t,\alpha}) \ff{|\nabla u_t|^2}{u_t} (x) -    L u_t (x) \\
& \le  \frac{n\alpha  }{2 } \,  \E^x \bigg[u_0(X_t)\int_0^{t}\Big(\frac{K (X_s)}{\alpha-1}\ell_s+\ell'_s\Big)^2\e^{\frac{2}{\alpha-1}\int_0^s K(X_r)\,\d r   }\,\d s\bigg],\end{split}
\end{equation}
where $$\gg_{t,\aa}   :=  2 K_0 \int_0^{t}  \ell_s^2 \e^{\frac{2\aa}{\alpha-1}K_0 s }  \,\d s>\ff 1\aa -1.$$
 \end{enumerate} \end{thm}

\begin{proof}  (a)  When $\pp M$ exists, noting that
   $N u_s|_{\pp M}=0$ implies  $\nn u_s|_{\pp M}\in T\pp M$, we derive from \eqref{S} that on $\pp M$,
\beq\label{N} N \ff{|\nn u_s|^2}{u_s} =  \ff{N |\nn u_s|^2 }{u_s} =  \ff{2\II(\nn u_s,\nn u_s)}{u_s} \ge 2\si(X_s) \ff{|\nn u_s|^2}{u_s},\ \ s>0.\end{equation}
Next,  $u$ is the solution to the equation  \eqref{H-equation}   implies
\beq\label{N2} (L +\pp_s) u_{t-s}= 0= (L +\pp_s) L u_{t-s},\ \ \ s\in [0,t).\end{equation}
This together  with the Bochner-Weizenb\"ock formula leads to
\beg{align*} &(L+\pp_s)\bigg(\ff{|\nn u_{t-s}|^2}{u_{t-s} }\bigg)= \ff1{u_{t-s}}\big(L |\nn u_{t-s}|^2 -2\<\nn L u_{t-s},\nn u_{t-s}\>\big) \\
&\qquad  -  \ff{4}{u_{t-s}^2}\Hess_{u_{t-s}}( \nn u_{t-s},\nn u_{t-s})
+\ff{2|\nn u_{t-s}|^4}{u_{t-s}^3}\\
&=\ff 2 {u_{t-s}}\bigg( \|\Hess_{u_{t-s}}\|_{HS}^2 -\ff 2 {u_{t-s}} \Hess_{u_{t-s}}(\nn u_{t-s}, \nn u_{t-s})+ \ff{|\nn u_{t-s}|^4}{u_{t-s}^2}  + \Ric_Z(\nn u_{t-s},\nn u_{t-s}) \bigg) \\
&=\ff 2 {u_{t-s}} \bigg\|\Hess_{u_{t-s}} -\ff{\nn u_{t-s}\otimes\nn u_{t-s}}{u_{t-s}}\bigg\|_{HS}^2  + \ff 2 {u_{t-s}} \Ric_Z(\nn u_{t-s},\nn u_{t-s})\\
&\ge \ff 2 {m  u_{t-s} } \bigg(\DD u_{t-s}-\ff{|\nn u_{t-s}|^2}{u_{t-s}} \bigg)^2 + \ff 2 {u_{t-s}} \Ric_Z(\nn u_{t-s},\nn u_{t-s}).\end{align*}
Combining this with \eqref{C} and the fact that
\beg{align*}&\ff 1 {m} \bigg(\DD u_{t-s}-\ff{|\nn u_{t-s}|^2}{u_{t-s}} \bigg)^2 = \ff 1 m \bigg(L u_{t-s}-\ff{|\nn u_{t-s}|^2}{u_{t-s}} -Z u_{t-s}\bigg)^2\\
&\ge \ff 1 n \bigg(L u_{t-s}-\ff{|\nn u_{t-s}|^2}{u_{t-s}} \bigg)^2
-\ff{|Z u_{t-s}|^2}{n-m},\end{align*}
we derive
\beq\label{P1}  (L+\pp_s)\bigg(\ff{|\nn u_{t-s}|^2}{u_{t-s} } \bigg) \ge \ff{2 }{n u_{t-s}} \bigg(L u_{t -s}-\ff{|\nn u_{t-s}|^2}{u_{t-s}} \bigg)^2  + 2K\ff{|\nn u_{t-s}|^2}{u_{t-s}},\
\ s\in [0,t].  \end{equation}

 In the following, we use the above estimates and It\^o's formula to prove \eqref{J0} and \eqref{A1} respectively.  For simplicity,
   let $\overset{m}{\ge }$, $\overset{m}{\le }$ and    $\overset{m}{=}$  denote the corresponding inequalities and  equality up to an additive  local martingale term.

(b) Let $h_s=\ell_s^2$.  By \eqref{SDE}, \eqref{N}, \eqref{P1} and  It\^{o}'s formula,   we obtain
\begin{align*}
&\d\left(h_s \e^{ -2\int_0^s [K (X_r)\,\d r +\si(X_r)\,\d\scr L_r] }  \frac{|\nabla u_{t-s}|^2}{u_{t-s}} (X_s)\right)  \\
& \overset{m}{=}\e^{ -2\int_0^s [K (X_r)\,\d r +\si(X_r)\,\d\scr L_r]}\bigg\{\big( h_s'  -2h_s K(X_s)\big)  \frac{|\nabla u_{t-s}|^2}{u_{t-s}} (X_s) + h_s\Big[\big(L+\pp_s\big) \frac{|\nabla u_{t-s}|^2}{u_{t-s}} \Big](X_s) \bigg\}\d s\\
&\qquad +h_s \e^{ -2\int_0^s [K (X_r)\,\d r +\si(X_r)\,\d\scr L_r]}\bigg(N \l( \frac{|\nabla u_{t-s}|^2}{u_{t-s}} \r) -2\si  \frac{|\nabla u_{t-s}|^2}{u_{t-s}}\bigg)(X_s)\,\d\scr L_s\\
&  \ge   h_s'   \e^{ -2\int_0^s[K (X_r)\,\d r +\si(X_r)\,\d\scr L_r] }  \bigg(\frac{|\nabla u_{t-s}|^2}{u_{t-s}} -L u_{t-s} +Lu_{t-s}\bigg)(X_s)\, \d s\\
&\qquad + \ff{2 h_s}{nu_{t-s}(X_s)} \e^{ -2\int_0^s[K (X_r)\,\d r +\si(X_r)\,\d\scr L_r]} \bigg(L u_{t -s}-\ff{|\nn u_{t-s}|^2}{u_{t-s}} \bigg)^2(X_s)\, \d s\\
&\ge    -\frac{n|h'_s|^2}{8h_s}\e^{-2\int_0^s[ K (X_r)\,\d r +2 \si(X_r)\,\d\scr L_r]}u_{t-s}(X_s)\, \d s +  h'_s
\e^{-2\int_0^s[K (X_r)\,\d r +\si(X_r)\,\d\scr L_r]} L u_{t-s}(X_s)\, \d s.
\end{align*}
Since $h_s=\ell_s^2, h_0= 1$ and $h_t =0$, and $\|u\|+\|Lu\|$ is bounded on $[0,t]\times M$, by \eqref{CV0'} and the dominated convergence theorem, this implies
\beq\label{*} \beg{split}  \ff{|\nn u_t|^2}{u_t}(x)&\le \ff n 2\, \E^x \bigg[\int_0^t u_{t-s}(X_s)  |\ell_s'|^2 \e^{-2\int_0^s [ K (X_r)\,\d r + \si(X_r)\,\d\scr L_r]}\,\d s\bigg]\\
 &\quad -  \E^x\bigg[\int_0^t (L u_{t-s})(X_s)   (\ell_s^2)'   \e^{-2\int_0^s [ K (X_r)\,\d r + \si(X_r)\,\d\scr L_r]}\,\d s\bigg].\end{split}\end{equation}
Noting that \eqref{XW} and the Markov property  imply
\begin{align}\label{MM1} (L u_{t-s})(X_s)=\E^{x} (Lu_0(X_t)|\F_s),\ \ \ u_{t-s}(X_s)= \E^{x} (u_0(X_t)|\F_s),\end{align}
we derive \eqref{J0}.

(c)  Let $\si=0$ and $\ell_s$ be deterministic with $\ell_s'\le 0$.  Noting that $NLu_{t-s} |_{\pp M}=0$ for $s\in [0,t)$,
 by \eqref{SDE}, \eqref{N} for $\si=0$,  and   It\^o's formula, we obtain
\beg{align}\label{Ito-M}&\d\bigg(\e^{\frac{2}{\alpha-1}\int_0^s K(X_r)\,\d r}\ell_s^2 \Big( \frac{|\nabla u_{t-s}|^2}{u_{t-s}}-\alpha L u_{t-s}\Big)(X_s)\bigg)\notag\\
&\overset{m}{\ge}\e^{\frac{2}{\alpha-1}\int_0^s K(X_r)\,\d r} \Big(\frac{2K(X_s)}{\alpha-1}\ell_s^2+2\ell_s \ell'_s \Big) \l( \frac{|\nabla u_{t-s}|^2}{u_{t-s}} -\alpha Lu_{t-s}\r)(X_s)\, \d s\notag\\
&\quad  + \e^{\frac{2}{\alpha-1}\int_0^s K(X_r)\,\d r}\ell_s^2  \bigg[\big(L+\pp_s\big)  \bigg( \frac{|\nabla u_{t-s}|^2}{u_{t-s}}-\alpha L u_{t-s}\bigg)\bigg](X_s)\, \d s,\ \ s\in [0,t].\end{align}
Combining this with \eqref{N2} and   \eqref{P1},  we obtain
  \begin{align*}
&\d\bigg(\e^{\frac{2}{\alpha-1}\int_0^s K(X_r)\,\d r}\ell_s^2 \Big( \frac{|\nabla u_{t-s}|^2}{u_{t-s}}-\alpha L u_{t-s}\Big)(X_s)\bigg) \\
&\overset{m}{\ge} 2 \alpha \e^{\frac{2}{\alpha-1}\int_0^s K(X_r)\,\d r} \Big(\frac{K(X_s)}{\alpha-1}\ell_s^2+ \ell'_s \ell_s \Big) \Big(\frac{|\nabla u_{t-s}|^2}{u_{t-s}}-L u_{t-s}\Big) (X_s)\, \d s\\
&\quad  +\frac{2\e^{\frac{2}{\alpha-1}\int_0^s K(X_r)\,\d r}\ell_s^2}{nu_{t-s}(X_s)}   \Big(L \,  u_{t-s}-\frac{|\nabla u_{t-s}|^2}{u_{t-s}} \Big)^2(X_s)\, \d s \\
 &\quad  -2(\alpha-1) \e^{\frac{2}{\alpha-1}\int_0^s K(X_r)\,\d r} \ell_s \ell'_s \frac{|\nabla u_{t-s}|^2}{u_{t-s}} (X_s)\, \d s\\
 &\ge -\ff{n\aa^2}2  \e^{\frac{2}{\alpha-1}\int_0^s K(X_r)\,\d r}  \Big(\frac{K(X_s)}{\alpha-1}\ell_s+ \ell'_s  \Big)^2 u_{t-s}(X_s)\, \d s\\
 &\quad - 2(\alpha-1) \e^{\frac{2}{\alpha-1}\int_0^s K(X_r)\,\d r} \ell_s \ell'_s \frac{|\nabla u_{t-s}|^2}{u_{t-s}} (X_s)\, \d s,\ \ s\in [0,t].
\end{align*}
 Combining the condition \eqref{CV0'} with the boundedness of $u$ and $Lu$, $\ell_0=1$ and $\ell_t=0$, this  implies
\beq\label{P2-0}  \ff{|\nabla u_t|^2}{u_t}(x)  - \alpha L u_t (x) \le  \ff{n\aa^2}2 I_1+ (\alpha-1) I_2,
\end{equation}
where by \eqref{MM1},
 \beg{align*}
 I_1&:=   \E^x\l[ \int_0^t \e^{\frac{2}{\alpha-1}\int_0^s K(X_r)\,\d r} \Big(\frac{K(X_s)}{\alpha-1}\ell_s+ \ell'_s \Big)^2 u_{t-s}(X_s)\, \d s\r]\\
&=  \E^x\bigg[u_0(X_t) \int_0^t  \e^{\frac{2}{\alpha-1}\int_0^s K(X_r)\,\d r} \Big(\frac{K(X_s)}{\alpha-1}\ell_s+ \ell'_s \Big)^2  \d s\bigg],\end{align*}
and
$$I_2:=    2\E^x\l[\int_0^t \e^{\frac{2}{\alpha-1}\int_0^s K(X_r)\,\d r} \ell_s \ell'_s \frac{|\nabla u_{t-s}|^2}{u_{t-s}} (X_s)\, \d s\r].$$
  So, to prove \eqref{A1}, it remains to estimate $I_2$.

By \eqref{SDE}, \eqref{N}, \eqref{P1}   and  It\^{o}'s formula, we obtain
\begin{align*}
\d \l( \e^{-2K_0s} \frac{|\nabla u_{t-s}|^2}{u_{t-s}} (X_s)\r)\overset{m}\ge  0,  \ \ s\in [0,t],
\end{align*} which together with
 $ \ff 2{\aa-1}K_0+2K_0=\ff{2\aa}{\aa-1}K_0 $ and $\ell_s'\le 0$ yields
\beq\label{PP}\beg{split}  I_2&=  \E^x \bigg[\int_0^t   (\ell_s^2)'  \,  \e^{\frac{2\aa}{\alpha-1}  K_0s } \,\E^x \Big(\e^{-2K_0s} \frac{|\nabla u_{t-s}|^2}{u_{t-s}} (X_s)\Big)\d s\bigg]\\
  &\le  \frac{|\nabla u_{t}|^2}{u_{t}} (x)\, \int_0^t   (\ell_s^2)'  \,  \e^{\frac{2\aa}{\alpha-1}  K_0 s }\d s.\end{split} \end{equation}
By $ \ell_s'\le 0,  \ell_0=1, \ell_t=0$ and integration by parts formula,  we derive
 \beq\label{PO}   0< -  \int_0^t\e^{\frac{2\aa }{\alpha-1} K_0s}  (\ell_s^2)'    \,\d s
 = 1+ \ff {2\aa} {\aa-1}  \int_0^t\ell_s^2 K_0 \e^{\ff {2\aa}{\aa-1}K_0s} \,\d s= 1+\ff{\aa}{\aa-1}\gg_{t,\aa}.\end{equation}
So,  $\gg_{t,\aa}> \ff 1 \aa-1$ and combining
with \eqref{P2-0} and \eqref{PP}  implies \eqref{A1}.
    \end{proof}

\beg{rem}\label{2.1} By using $u_0+\vv$ replacing $u_0$ then letting $\vv\to 0$, we may and do assume that $\inf u_0>0$. When $M$ is compact,  the continuous functions $\ff{|\nn u|^2}{u}$    on $[0,T]\times M$ as well as $K$ and $\si$ on $M$ are  bounded, so that  the condition \eqref{CV0'}  is easily checked. When $M$ is non-compact, if one of the following two conditions hold:
\beg{enumerate} \item[1)] $\pp M$ is either convex or empty;
\item[2)] $\II$ and $Z$ are bounded, the sectional curvature on $M$ is bounded above, and $K\in C_b(M)$,\end{enumerate}
then by \cite[Theorem 3.2.9]{Wbook}, see also \cite{HSU} for $Z=0$ and $M$ is compact, we have
\beq\label{G}|\nn u_t(x)|\le \E^x\Big[|\nn u_0|(X_t) \e^{-\int_0^t\{ K(X_s)\d s+\si(X_s)\d\L_s\}}\Big],\ \ t>0.\end{equation}
Moreover, $\E\e^{p \scr L_t}<\infty$ holds for any constants $t,p>0.$
Hence,  the boundedness of $|\nn u_0|$ implies
that of $|\nn u|$ on $[0,T]\times M$, and \eqref{CV0'} holds provided $K$ is bounded from below and there exists a constant $\dd>2$ such that
\beq\label{G1} \E^x \int_0^t|\ell_s'|^\dd\d s <\infty,\end{equation}
since $|\ell_s|=|\int_s^t\ell_s'\d s|\le \big(\int_0^t|\ell_s'|^\dd\d s\big)^{\ff 1 \dd}.$
\end{rem}

When $\pp M$ is non-convex, to apply Theorem \ref{T1} we have to estimate the exponential moment of the local time, which have been done in \cite{W05, W07, Wbook}. However, in this way one can only derive a weaker version of Li-Yau inequality where in the upper bound $u_t$ and $Lu_t$ are enlarged as $(P_t u_0^p)^{\ff 1 p} $ and
$(P_t |L u_0|^p)^{\ff 1 p}$ for some constant $p>1$.

To derive the exact Li-Yau type inequality for non-convex $M$, we present the following result by modifying the proof of Theorem \ref{T1}.  
To this end, we follow the line of  \cite{W05} to make use of a reference  function   in the following class:
$$\scr D:=\big\{1\le \phi\in C_b^2(M):\ (\II+ N\log \phi)|_{\pp M}\ge 0 \big\}.$$
Concrete choices of $\phi\in\D$ can be found in \cite{W05}  as functions of the distance to $\pp M$, which are explicitly constructed by using
bounds on the sectional curvature of $M$ and the second fundamental form of $\pp M$, see also the proof of Corollary \ref{3.3} below.

\beg{thm}\label{T1'}
Let $\pp M$ be non-convex, and let \eqref{C} hold for some   $K\in C_b(M)$. Assume that there exists   $\phi\in \D$  such that
$\|Z\phi\|_\infty<\infty$.  Then 
\beq\label{KP} \beg{split}& K_{\phi}:=2\inf_M \{K+\phi^{-1}L\phi\}>-\infty,\\
&K_{\aa,\phi} :=2\inf_M \ff{K \phi^2+\phi L\phi}{\aa-\phi^2}>-\infty,\ \ \aa>\|\phi\|_\infty^2,\end{split}
\end{equation}
and  the following assertions hold for any $t>0$ and $\ell\in C_b^1([0,t])$ with $ \ell_0=1$ and $\ell_t=0$.   
\begin{itemize}
\item [$(1)$] For any constant $\vv>0$,
  \beq\label{A1''} \begin{split}
    \ff{\phi^2|\nabla u_t|^2}{ \|\phi\|_{\infty}^2 u_t^2}  \le &\,\bigg(2 \int_0^t\ell_s |\ell_s' | \e^{(\vv-K_\phi)s} \, \d s\bigg) \frac{Lu_{t}}{u_t}\\
    & + \l(\frac{n}{2} +\frac{\|\nabla \log \phi\|_{\infty}^2}{\vv}\r)\int_0^t(\ell_s')^2\e^{(\vv-K_\phi)s} \, \d s.\end{split}
\end{equation}
\item [$(2)$]  For any  constants $\aa>\|\phi\|_\infty^2$ and $\vv>0$,   
\beq\label{A100'} \begin{split}
&   (1+{\gamma}_{t,\alpha,\phi}) \ff{\phi^2|\nabla u_t|^2}{u_t^2} -   \aa \ff {L u_t}{u_t} \\
& \le  \int_0^t \e^{(K_{\aa,\phi}-\vv) s} \big((K_{\aa,\phi}-\vv) \ell_s +2\ell_s'\big)^2 \bigg(\ff{n\aa^2 }{8}  + \ff{\aa^2 \|\nn\log\phi\|_\infty^2}{4\vv(\aa-\|\phi\|_\infty^2) } \bigg)\,\d s,\end{split}
\end{equation}
where ${\gamma}_{t,\alpha,\phi}=2\big(\frac{\alpha}{\|\phi\|_{\infty}^2} -1\big) \int_0^t |\ell_s\ell_s'| \e^{(K_{\aa,\phi}+K_\phi-\vv)s}\, \d s>0$.
\end{itemize}
\end{thm}
\beg{proof}   We may assume that $\inf u_0>0$. 
Since $\phi\in C_b^2(M)$, $\|Z\phi\|_\infty<\infty$  implies condition (3.2.15) in \cite{Wbook}, then \cite[Theorem 3.2.7]{Wbook}  for $f=u_0$ implies the boundedness 
of $|\nn u|$ on $[0,t]\times M$. So, $\ff{|\nn u|^2}{u}$ is bounded on $[0,t]\times M.$

By $\phi\in \D$ and \eqref{N} for $\sigma=-(N\log \phi)$, we obtain
\beq\label{NN} N\Big(\ff{\phi^2 |\nn u_s|^2}{u_s}\Big)\Big|_{\pp M}  = \ff{2\phi^2}{u_s}  \big(\II(\nn u_s,\nn u_s) + (N \log \phi)|\nn u_s|^2\big)\big|_{\pp M}\ge 0.\end{equation}
By \eqref{C} and the display after \eqref{N2}, we obtain
\beg{align*} &(L+\pp_s) \ff{\phi^2 |\nn u_{t-s}|^2} {u_{t-s}} = \ff{2\phi^2}{u_{t-s}}\Big(\big\|\Hess_{u_{t-s}}- \nn u_{t-s}\otimes\nn \log u_{t-s} \big\|_{HS}^2 +\Ric_Z(\nn u_{t-s},\nn u_{t-s})\Big) \\
&\qquad +\ff{(L\phi^2) |\nn u_{t-s}|^2}{u_{t-s}} +\ff{4\phi^2}{u_{t-s}}\Hess_{u_{t-s}}(\nn u_{t-s}, \nn\log\phi) -\ff{2\phi^2|\nn u_{t-s}|^2}{u_{t-s}} \<\nn \log u_{t-s},\nn \log\phi\> \\
&= \ff{2\phi^2}{u_{t-s}}\Big(\Big\|\Hess_{u_{t-s}}- \nn u_{t-s}\otimes  \nn \log \ff{u_{t-s}}\phi  \Big\|_{HS}^2    +   \Ric_Z(\nn u_{t-s},\nn u_{t-s})\Big) \\
 &\qquad  + \ff{|\nn u_{t-s}|^2(L\phi^2- 2|\nn \phi|^2)}{u_{t-s}} \\
&\ge \ff{2\phi^2}{u_{t-s}} \Big[\ff 1 m \Big(L u_{t-s} - \Big\<\nn u_{t-s},\nn   \log \ff{u_{t-s}}\phi \Big\>- Z u_{t-s} \Big)^2+  \Ric_Z(\nn u_{t-s},\nn u_{t-s})\Big] \\
&\qquad + \ff{2(\phi L\phi)|\nn u_{t-s}|^2}{u_{t-s}} \\
&\ge \ff{2\phi^2}{  u_{t-s}} \Big[\ff 1 n \Big(L u_{t-s} - \Big\<\nn u_{t-s},\nn   \log \ff{u_{t-s}}\phi \Big\>  \Big)^2+ \Ric_Z^{(n-m)}(\nn u_{t-s},\nn u_{t-s})\Big]  \\
&\qquad + \ff{2(\phi L\phi)|\nn u_{t-s}|^2}{u_{t-s}} \\
&\ge \ff{2\phi^2}{n u_{t-s}} \Big(L u_{t-s} - \Big\<\nn u_{t-s}, \nn \log \ff{u_{t-s}}\phi \Big\>  \Big)^2+    \ff{2(K\phi^2+\phi L\phi)|\nn u_{t-s}|^2}{u_{t-s}}.\end{align*}
Hence, \eqref{C}  and \eqref{KP} yield 
\beq\label{Lest}\beg{split} &(L+\pp_s) \ff{\phi^2 |\nn u_{t-s}|^2} {u_{t-s}}\\
 &\ge \ff{2\phi^2}{n u_{t-s}} \Big(L u_{t-s} - \Big\<\nn u_{t-s}, \nn \log \ff{u_{t-s}}\phi \Big\>  \Big)^2+    \ff{2(K\phi^2+\phi L\phi) |\nn u_{t-s}|^2}{u_{t-s}}\\
 &\ge  \ff{2\phi^2}{n u_{t-s}} \Big(L u_{t-s} - \Big\<\nn u_{t-s}, \nn \log \ff{u_{t-s}}\phi \Big\>  \Big)^2+ \ff{K_\phi\phi^2 |\nn u_{t-s}|^2}{u_{t-s}}.\end{split}\end{equation} 

(a) By   \eqref{NN}, \eqref{Lest} and  It\^{o}'s formula, we derive 
\beg{align*}
&\d \l( \ell^2_s \e^{(\vv-K_\phi)s} \ff{\phi^2 |\nn u_{t-s}|^2} {u_{t-s}} (X_s)\r) \notag\\
&  \overset{m}{\geq}   2\ell^2_s \e^{(\vv-K_\phi)s}   \ff{\phi^2(X_s)}{n u_{t-s}} \Big(L u_{t-s} - \Big\<\nn u_{t-s}, \nn \log \ff{u_{t-s}}\phi \Big\>  \Big)^2(X_s) \, \d s\notag\\
&\quad +2 \ell_s \ell_s'  \e^{(\vv-K_\phi)s}\phi^2(X_s)  \Big(\Big\<\nn u_{t-s}, \nn \log \ff{u_{t-s}}\phi \Big\>  -Lu_{t-s} \Big) (X_s)\, \d s\notag\\
&\quad +2 \ell_s \ell_s'  \e^{(\vv-K_\phi) s}\phi^2(X_s)  \Big( Lu_{t-s}+\langle \nabla u_{t-s}, \nabla \log \phi \rangle\Big)(X_s)\, \d s \notag\\
&\quad + \vv \ell^2_s \e^{(\vv-K_\phi)s} \ff{\phi^2 |\nn u_{t-s}|^2} {u_{t-s}} (X_s)\, \d s\notag\\
&\geq \e^{(\vv-K_\phi)s}  \phi^2 (X_s) 
 \Big((\ell_s^2)'       Lu_{t-s}   +2  \ell_s  \ell_s'  \< \nabla u_{t-s}, \nabla \log \phi\> +   \ff{\vv \ell^2_s   |\nn u_{t-s}|^2} {u_{t-s}}  -\frac{n(\ell_s')^2   u_{t-s} }2 
 \Big)(X_s)\, \d s.\end{align*}
Thus, 
\beq\label{noncovexineq}\beg{split} &\d \l( \ell^2_s \e^{(\vv-K_\phi)s} \ff{\phi^2 |\nn u_{t-s}|^2} {u_{t-s}} (X_s)\r) \\
&  \overset{m}{\geq}  \e^{(\vv-K_\phi)s}  \phi^2(X_s)  \bigg[(\ell_s^2)'     Lu_{t-s}-\Big(\frac{n}{2} +\frac{\|\nabla \log \phi\|_{\infty}^2}{\vv}\Big)(\ell_s')^2 u_{t-s} \bigg](X_s) \, \d s.
\end{split}\end{equation} 
 Combining this with
 the boundedness of $\ff{|\nn u_{t-s}|^2}{u_{t-s}}$ as explained in the beginning of the proof,  $\ell_0=1, \ell_t=0,$ and  that $u_t$ and $L u_t$ are  bounded with
 $$\E^x u_{t-s}(X_s)= u_t(x),\ \ \E^x Lu_{t-s}(X_s)= Lu_t(x),$$ 
 we derive \eqref{A1''}.  

(b) Simply denote $\bb=K_{\aa,\phi}-\vv.$ Combining \eqref{Lest} with \eqref{P1}, \eqref{NN}, $N Lu_{t-s}|_{\pp M}=0$ and $(L+\pp_s) Lu_{t-s}=0$ for $s\in [0,1)$, and applying It\^o's formula to \eqref{SDE}, we derive 
\beg{align*} & \d\bigg\{\e^{\bb s} \ell_s^2 \Big(\ff{\phi^2 |\nn u_{t-s}|^2} {u_{t-s}}-\aa L u_{t-s}\Big)(X_s)\bigg\} \\
&\overset{m}{=} \e^{\bb s}\bigg\{\ell_s^2(L+\pp_s) \ff{\phi^2 |\nn u_{t-s}|^2} {u_{t-s}} + \big(\bb \ell_s^2 + 2\ell_s\ell_s'\big)  \Big(\ff{\phi^2 |\nn u_{t-s}|^2} {u_{t-s}}-\aa L u_{t-s}\Big) \bigg\}(X_s)\,\d s\\
&\ge  \e^{\bb s}\bigg\{\ff{2\ell_s^2\phi^2}{n u_{t-s}}  \Big(L u_{t-s} - \Big\<\nn u_{t-s}, \nn \log \ff{u_{t-s}}\phi \Big\>  \Big)^2 \\
&\quad  -\aa (\bb \ell_s^2+2\ell_s\ell_s')  \Big(L u_{t-s} 
- \Big\<\nn u_{t-s}, \nn \log \ff{u_{t-s}}\phi \Big\>  \Big)  
   -\aa (\bb \ell_s^2+2\ell_s\ell_s') \Big\<\nn u_{t-s},  \nn \log \ff{u_{t-s}}\phi \Big\>  \\
 &\quad  \qquad \qquad \qquad \qquad  \qquad \qquad \qquad +   \Big(\big[\bb+2K+2\phi^{-1}L\phi\big]\ell_s^2+2\ell_s\ell_s' \Big) \ff{\phi^2|\nn u_{t-s}|^2}{u_{t-s}} \bigg\}(X_s)\,\d s\\
 &\ge  \e^{\bb s}\bigg\{- \ff{n\aa^2 (\bb \ell_s +2 \ell_s')^2}{8\phi^2} u_{t-s} 
   +\big(2K\phi^2+\phi L\phi+\bb(\phi^2-\aa )\big)\ell_s^2 \ff{  |\nn u_{t-s}|^2}{u_{t-s}}\\
 &\qquad \quad  -2 \ell_s\ell_s'(\aa -\phi^2)    \ff{|\nn u_{t-s}|^2}{u_{t-s}} -\aa |\bb \ell_s^2 +2\ell_s \ell_s'|\|\nn\log\phi\|_\infty |\nn u_{t-s}| \bigg\} (X_s)\,\d s.  \end{align*} 
Since $\ell_s\ell_s'<0$, $\aa>\phi\ge 1$, and  $\bb= K_{\aa,\phi}-\vv$  yields 
$$   2 K \phi^2 +2\phi  L\phi  +  \bb (\phi^2-\aa )  \ge  (\bb-K_{\aa,\phi}) (\phi^2-\aa )  \ge  (\aa-\|\phi\|_\infty^2)\vv,$$
this further implies 
\beg{align*}  & \d\bigg\{\e^{\bb s} \ell_s^2 \Big(\ff{\phi^2 |\nn u_{t-s}|^2} {u_{t-s}}-\aa L u_{t-s}\Big)(X_s)\bigg\} \notag\\ &\ge   
-  \e^{\bb s}  (\bb\ell_s+2\ell_s')^2 \bigg(\ff{n\aa^2 }{8}  + \ff{\aa^2 \|\nn\log\phi\|_\infty^2}{4(\aa-\|\phi\|_\infty^2)\vv}\bigg)u_{t-s}(X_s)\,\d s\notag \\
& \quad +2|\ell_s\ell_s'|(\alpha\phi^{-2}-1) \e^{ \beta s}     \ff{\phi^2|\nn u_{t-s}|^2}{u_{t-s}}(X_s)\, \d s.
\end{align*} 
Combining this with $\E u_{t-s}(X_s)= u_t(x) $ for $X_0=x$,  
 the boundedness of $\ff{|\nn u_{t-s}|^2}{u_{t-s}}, u_{t-s}, L u_{t-s}$ as explained in the beginning of the proof,  and $\ell_0=1, \ell_t=0,$  
 we derive
\beq \label{Es}\beg{split}  &\ff{\phi^2 |\nn u_t|^2}{u_t} (x) -\aa L u_t(x) \\
&\le u_t(x)\bigg(\ff{n\aa^2 }{8}  + \ff{\aa^2 \|\nn\log\phi\|_\infty^2}{4(\aa-\|\phi\|_\infty^2)\vv}\bigg) \int_0^t \e^{\bb s}  (\bb\ell_s+2\ell_s')^2   \d s\\
&\quad - 2 \int_0^t |\ell_s\ell_s' | \e^{\beta  s}   \E\Big[ (\alpha-\phi^{2})  \ff{|\nn u_{t-s}|^2}{u_{t-s}}\Big](X_s)\, \d s.\end{split}\end{equation} 
By  \eqref{noncovexineq} for $\ell=1$ and $\vv\downarrow 0$,  we have 
\begin{align*}
\d \l(\e^{-K_{\phi} s}  \frac{\phi^2|\nabla u_{t-s}|^2}{u_{t-s}}(X_s)\r)\geq 0,
\end{align*}
so that 
\begin{align*}
& -2 \int_0^t |\ell_s\ell_s' | \e^{\beta  s}   \E\Big[ (\alpha-\phi^{2})  \ff{|\nn u_{t-s}|^2}{u_{t-s}}\Big](X_s)\, \d s\\
&\le  - 2\int_0^t|\ell_s\ell_s'|(\alpha \|\phi\|_{\infty}^{-2}-1) \e^{(\beta+K_{ \phi}) s} \E \l[ \e^{-K_{\phi} s} \phi^2\ff{|\nn u_{t-s}|^2}{u_{t-s}}(X_s)\r]\, \d s\\
&\le -2(\alpha  \|\phi\|_{\infty}^{-2}-1)\bigg(\int_0^t|\ell_s\ell_s' |\e^{(\beta+K_{\phi}) s}\, \d s\bigg) \ff{|\phi^2\nn u_{t}|^2}{u_{t}}(x).  
\end{align*}
 Combining this with \eqref{Es},   $\bb=K_{\aa,\phi}-\vv$ and the definition of $\gg_{t,\aa,\phi},$
 we finish the proof of \eqref{A100'}. 
\end{proof} 

\section{Global  Li-Yau type estimates   }

We will present explicit   Li-Yau type estimates by using Theorem \ref{T1} and Theorem \ref{T1'} for the convex and non-convex cases respectively.

\subsection{$\pp M$ is convex or empty}

By  Remark \ref{2.1},    \eqref{CV0'} holds for all $\ell\in C_b^1([0,t])$ provided $K$ is bounded from below and $\pp M$ is  either convex or empty.
Therefore, estimates \eqref{J0} holds for $\si=0$, and \eqref{A1} holds for $\ell_s'\le 0$.   By taking specific choices of $\ell_s$ in
these estimates,  we    present explicit Li-Yau type inequalities  in  the following Corollaries \ref{C1} and \ref{cor11}, where
\beg{enumerate} \item[$\bullet$]    \eqref{J2}  improves  \eqref{J'} when $t>\ff{\pi}K$, and   is sharp for small time as shown by \cite[Corollary 2.3]{BBG}; 
  \item[$\bullet$]     \eqref{A2} and \eqref{A2'} are new even for $Z=0$ and $\pp M=\emptyset$;
  \item[$\bullet$]     \eqref{BBG} is due to \cite{BBG} for $\pp M=\emptyset$, which improves   a number of  classical bounds recalled in the introduction as shown in \cite[Section 5]{BBG}.    
  \end{enumerate} 
 When $\pp M$ is strictly convex such that $\si$ is a positive constant,
  $\E^x[\e^{-2\si \scr L_s}]$ decays exponentially fast as $s\to\infty$ according to   \cite[Lemma 3.1]{W07},  so that \eqref{J0} may provide better estimates than those presented in Corollary \ref{C1}.

 \beg{cor}  \label{C1} Assume that $\pp M$ is either empty or convex, and let $\eqref{C}$ hold for a constant $K\in\R.$ Then   the inequality \eqref{J0}  holds for $\si=0$, which implies the following estimates.
 \beg{enumerate}
 \item[$(1)$] For any $t>0$,
 \beq\label{J2} \ff{L u_t}{u_t}\beg{cases} \le \ff n {4t}\big[(Kt)\land\pi + \ff{\pi^2}{(Kt)\land \pi}\big],\ &\text{if}\ K>0, \\
  \ge  -\ff{n}{4t} \big[\pi\lor (-Kt) + \ff{\pi^2}{\pi\lor (-Kt)}\big], \  &\text{if}\ K\le 0.\end{cases} \end{equation}
  \item[$(2)$] When $K\ne 0$, for  any constant $\aa  \in \R$ such that
  $  \ff{1+\aa}{Kt}\ge \ff{9\pi^2-64}{9\pi^2},$     let
$$\bb_{t,\aa}:= \bigg( \ff{1+\aa}{Kt}- \ff{9\pi^2-64}{9\pi^2}  \bigg)^{\ff 1 2}- \ff 8{3\pi}.  $$ Then
\beq\label{A2}   \frac{|\nabla u_t|^2}{u_t^2}- \ff {\aa L u_t}{u_t}  \le
\ff n 2\bigg( \ff {K(\aa-1)} {2}   +\ff{(1+\aa)\pi^2}{2Kt^2} -\ff{2\pi \bb_{t,\aa}}t -\ff{3\pi^2}{8t}\bigg). \end{equation}
  \item[$(3)$] When $K>0$, for any   constant $K'\ge K$ such that $\Ric_Z\ge K'$,  we have 
\beq\label{A2'}  \frac{|\nabla u_t|^2}{u_t^2}\le  \ff n 2 \bigg(\ff{\pi^2K}{2[1\land (Kt)]^2} -\ff K2- \ff{3K\pi^2}{8[1\land (Kt)]}\bigg)\e^{-2K'(t-K^{-1})^+},\ \ t>0. \end{equation}
\item[$(4)$] $\eqref{BBG}$ holds.
 \end{enumerate} \end{cor}

\beg{proof} When $\pp M$ is empty or convex,  we may take $\si=0$ such that for any deterministic $\ell_s$ with
    $\ell_0=1$ and $ \ell_t=1$,
$$\int_0^t (\ell_s^2)' \e^{-2\int_0^s[ K (X_r)\,\d r+\si(X_r)\,\d\scr L_r]} \, \d s= - 1+ 2 \int_0^t K(X_s) \e^{-2\int_0^s K (X_r)\,\d r} \ell_s^2   \, \d s.$$
When $K$ is a constant, then   \eqref{J0} reduces to
\beq\label{J00'}
\frac{|\nabla u_t|^2}{u_t^2}
 \leq \frac{n}{2}  \left[\int_0^t |\ell_s'|^2 \e^{-2Ks}\, \d s\right]   +
\left(1-2 K  \int_0^t\ell_s^2 \e^{-2Ks} \d s\right)\ff{L u_t}{u_t}.\end{equation}

(1) For a fixed constant $a\in\R $, let
$$\ell_s:= \e^{Ks} \bigg( \cos\ff{\pi s}{2t} + a \sin\ff{\pi s}t\bigg),\ \ \ s\in [0,t].$$ We have
\beq\label{AB1}    \int_0^t \ell_s^2 \e^{-2Ks}\,\d s =  \int_0^{t} \Big[\cos^2 \ff{\pi s}{2t} + a^2 \sin^2  \ff{\pi s}t + 2 a \Big(\cos \ff{\pi s}{2t} \Big)   \sin\ff{\pi s}t \Big]\,\d s,\end{equation}
\beq\label{AB2} \beg{split} & \int_0^t |\ell_s'|^2 \e^{-2Ks}\,\d s = K^2\int_0^t \ell_s^2 \e^{-2Ks}\d s-K \\
 &\qquad  +
\int_0^t \Big[\ff{\pi^2 a^2 }{t^2} \cos^2 \ff{\pi s}t+ \ff{\pi^2}{4t^2} \sin^2\ff{\pi s}{2t}- \ff{\pi^2 a}{t^2} \Big(\cos\ff{\pi s}t\Big)\sin\ff{\pi s}{2t} \Big]\,\d s.\end{split}\end{equation}
Noting that
\beg{align*} &\int_0^t  \sin^2  \ff{\pi s}t\, \d s=\ff t{\pi} \int_0^\pi \sin^2\theta\, \d\theta= \ff{t}2=\int_0^t  \cos^2  \ff{\pi s}t\, \d s,\\
&\int_0^t\cos^2 \ff{\pi s}{2t}\,\d s= \ff {2t}\pi\int_0^{\ff\pi 2} \cos^2\theta\,\d\theta =\ff t 2= \int_0^t\sin^2 \ff{\pi s}{2t}\,\d s,\\
& \int_0^t \Big(\cos \ff{\pi s}{2t} \Big)   \sin\ff{\pi s}t\, \d s =\ff {2t}\pi \int_0^{\ff\pi 2} (\cos\theta)\sin(2\theta)\,\d\theta =- \ff {4t}\pi \int_0^{\ff\pi 2} (\cos^2\theta)\,\d \cos \theta =\ff{4t}{3\pi},\\
&\int_0^t \Big(\cos \ff{\pi s}t\Big) \sin\ff{\pi s}{2t}\,\d s=\ff{2t}\pi\int_0^{\ff\pi 2} (\cos (2\theta)) \sin\theta \,\d \theta= - \ff{2t}\pi\int_0^{\ff\pi 2} \big(2\cos^2  \theta -1\big)\,\d\cos\theta
= -\ff{2t}{3\pi}.
\end{align*} Combining these with \eqref{AB1} and \eqref{AB2}, we obtain
\beq\label{A*} \beg{split} &2K  \int_0^t \ell_s^2 \e^{-2Ks}\d s= Kt (1+ a^2)+ \ff{ 16 Kta}{3\pi},\\
&   \int_0^t |\ell_s'|^2 \e^{-2Ks}\,\d s=  \ff K {2}\Big(Kt (1+ a^2)+ \ff{16 Kta}{3\pi}\Big)   -K+  \ff{\pi^2a^2}{2t}+\ff{\pi^2}{8t}+\ff{2\pi a}{3t}.\end{split}\end{equation}
  Substituting into \eqref{J00'} and letting $a\to\infty$, we derive that when $K>0$,
  $$\ff{Lu_t}{u_2}\le \ff n 2 \lim_{a\to\infty} \ff{ \int_0^t |\ell_s'|^2 \e^{-2Ks}\,\d s}{2K  \int_0^t \ell_s^2 \e^{-2Ks}\,\d s}= \ff n 4  \Big(  K   + \ff{\pi^2}{Kt^2}\Big),$$
  while for $K<0$,
    $$\ff{Lu_t}{u_2}\ge \ff n 2 \lim_{a\to\infty} \ff{- \int_0^t |\ell_s'|^2 \e^{-2Ks}\,\d s}{-2K  \int_0^t \ell_s^2 \e^{-2Ks}\,\d s}= \ff n 4  \Big(  K   + \ff{\pi^2}{Kt^2}\Big).$$

 Since for any $k\in (0,K]$ the condition \eqref{C} holds for $k$ in place of $K$,  so that the above estimates   also hold  for  $k\le K$ replacing $K$. By taking $k=K\land \ff\pi t$   for $K>0$, and
 $K=K\land (-\ff \pi t)$ for $K<0$,  the above estimates for $k$ replacing $K$ implies  \eqref{J2}.

  (2)  By the definition of $\bb_{t,\aa}$ and \eqref{A*}, we obtain
  $$\bb_{t,\aa}^2= \ff {1+\aa} {Kt}-1-\ff{16\bb_{t,\aa}}{3\pi},$$
  and
  $$2K  \int_0^t \ell_s^2 \e^{-2Ks}\, \d s= Kt (1+ \bb_{t,\aa}^2)+ \ff{ 16 Kt\bb_{t,\aa}}{3\pi}=1+\aa.$$
 Combining these with \eqref{A*} and \eqref{J00'} for $a=\bb_{t,\aa}$, we derive
  \beg{align*} &\ff{|\nn u_t|^2}{u_t^2} -\ff {\aa L u_t}{u_t} \le \ff n 2 \int_0^t  |\ell_s'|^2 \e^{-2Ks}\,\d s \\
  &=\ff n 2\bigg(\ff {(1+\aa)K} {2} -K +\ff{\pi^2}{2t}  \Big(\ff 1 {Kt}-1-\ff{16 \bb_{t,\aa}}{3\pi}\Big) +\ff{\pi^2}{8t}+\ff{2\pi \bb_{t,\aa}}{3t}\bigg)\\
  & \le \ff n 2\bigg( \ff {(\aa-1)K} {2 }   +\ff{\pi^2}{2Kt^2} -\ff{2\pi  \bb_{t,\aa}}t -\ff{3\pi^2}{8t}\bigg).\end{align*}
  Then \eqref{A2} holds.

 (3)  When $t\le \ff 1 K$ and $\aa=0$, we have $ \bb_{t,\aa}\ge 0$,    so that \eqref{A2} implies
  \beq\label{TA}\beg{split}  \frac{|\nabla u_t|^2}{u_t^2}   & \le
\ff n 2\bigg( \ff {(\aa-1)K} {2 }    +\ff{ \pi^2}{2Kt^2} -\ff{2\pi \bb_{t,\aa} } t -\ff{3\pi^2}{8t}\bigg)\\
&\le \ff n 2\bigg( \ff{ \pi^2}{2Kt^2}  -\ff K 2  -\ff{3\pi^2}{8t}\bigg).\end{split} \end{equation}   Let
  $P_t$ be the (Neumann) semigroup generated by $L$. Then   $u_t= P_{(t-K^{-1})^+} u_{t\land K^{-1}}$ and  it is well known that $\Ric_Z\ge K'$ implies
  $$|\nn u_t|=|\nn P_{(t-K^{-1})^+} u_{t\land K^{-1}} | \le \e^{-K' (t-K^{-1})^+} P_{(t-K^{-1})^+} |\nn u_{t\land K^{-1}}|.$$
  Combining this with \eqref{TA} for $t\land K^{-1}$ replacing $t$, we derive  \eqref{A2'}.

(4) By \eqref{J'}, $\lambda:=1-\frac{4}{nK}\frac{ L  u_t}{u_t} >  -\frac{\pi^2}{K^2t^2}.$
Choose $\ell_s= h_s\e^{Ks} $ for  $$ h_s:=\begin{cases}
\frac{\sinh(K\sqrt{\lambda} (t-s))}{\sinh(K\sqrt{\lambda} t)},& \ \ \  \lambda >0;\\
\frac{t-s}{t},&\ \ \  \lambda=0; \\
\frac{\sin(K\sqrt{-\lambda }(t-s))}{\sin(K\sqrt{-\lambda }t)}, &\ \ \ \lambda\in (-\frac{\pi^2}{K^2t^2},0).
\end{cases}$$
We have $ \ell'_s= h'_s\e^{Ks}+Kh_s\e^{Ks}$. So, \eqref{J00'} implies
\begin{align*}
&\ff{|\nn u_t|^2}{u_t^2} \le \frac{n }{2}  \int_0^t |\ell_s'|^2 \e^{-2Ks}\, \d s   -2  \l[ \int_0^t\ell_s \ell'_s\e^{-2Ks} \, \d s\r] \frac{ L  u_t}{u_t} \notag\\
&=\frac{n }{2}  \int_0^t ( h'_s+Kh_s)^2\, \d s   -\frac{2 L  u_t}{u_t}    \int_0^th_s( h'_s+Kh_s) \, \d s \notag\\
&= \frac{n }{2} \int_0^t (|h'_s|^2+2Kh_s h'_s+K^2 h_s^2)\, \d s   -\frac{2 L  u_t}{u_t}  \int_0^t(h_s h'_s+Kh_s^2) \, \d s  \notag\\
&= \frac{n }{2}  \int_0^t |h'_s|^2\, \d s +\l( Kn-\frac{2 L  u_t}{u_t} \r)   \int_0^t h_s h'_s\,\d s
  + \l(\frac{K^2n}{2}-2K \frac{ L  u_t}{u_t}\r)  \int_0^t h_s^2\, \d s.
\end{align*}
It is easy to see that
\begin{align*}
 \int_0^t h_s h'_s\,\d s = \int_0^t h_s \,d h_s = \frac{h_t^2-h_0^2}{2} =-\frac{1}{2}.
\end{align*}
Moreover, by $h_s''= K^2\ll h_s$ for $\ll:= 1-\ff 4{nK}\ff{L u_t}{u_t}$ due to the definition of $h_s$, one has
 \begin{align*}
  \int_0^t |h'_s|^2\, \d s =   \int_0^t  h'_s\, d h_s
 =  h'_sh_s|^t_0- \int_0^t h_s'' h_s\, \d s
 =-   h'_0- K^2\l(1-\ff 4{nK}\ff{L u_t}{u_t}\r) \int_0^t   h_s^2\, \d s.
\end{align*} We then conclude that
 \begin{align*}   \ff{|\nn u_t|^2}{u_t^2}
 & \le \frac{ L  u_t}{u_t}
 -\frac{n}{2}  h_0'-\frac{Kn}{2}
 -\frac{nK^2}{2} \l(1-\ff 4{nK}\ff{L u_t}{u_t}\r)\int_0^t   h_s^2\, \d s + \l(\frac{K^2n}{2}-2K \frac{ L  u_t}{u_t} \r)  \int_0^t h_s^2\,\d s\\
 &= \frac{ L  u_t}{u_t}  -\frac{n}{2}  h_0'-\frac{Kn}{2}.
\end{align*}
This implies \eqref{BBG} by noting that $h_0'=-\Phi_t(\ll).$
\end{proof}

Estimates in the next corollary   are implied by $\eqref{A1}$, where \eqref{A2B} is new, and \eqref{NE} improves \eqref{LY1} by
noting that for $K< 0$ and $\aa>1$,
$$\ff K 4 \coth \Big(\ff{Kt}{2(\aa-1)}\Big)< \ff{K^-}2 +\ff{\aa-1}{2t},$$
and   as in \eqref{PO} we have
$$   \ff{2K\int_0^t (1- \e^{-\ff{Ks}{\aa-1}} )^2\d s}{\aa(1-\e^{-\ff{Kt}{\aa-1}})^2}>\ff 1 \aa-1.$$

\begin{cor}\label{cor11}
Assume that $\pp M$ is either empty or convex, and let $\eqref{C}$ hold for a constant $K\in\R.$  Then the inequality
$\eqref{A1}$ holds for $\sigma=0$ and  implies the following estimates.
\beg{enumerate}
 \item[$(1)$] For any constant $\aa>1$
\beq\label{NE} \beg{split}&\l(1+ \ff{2K\int_0^t (1- \e^{-\ff{Ks}{\aa-1}} )^2\d s}{\aa(1-\e^{-\ff{Kt}{\aa-1}})^2}\r)
\frac{|\nabla u_t|^2}{u_t^2}\\
&   \le \ff{  Lu_t}{u_t}+ \ff{nK\aa } {4(\aa-1)}\coth\Big(\ff{Kt}{2(\aa-1)}\Big),\ \ t>0.\end{split}\end{equation}
\item[$(2)$] For any $t>0$ and $\aa\ge 1+ K^-t$,
 \beq\label{A2B}    \Big(1+\ff 2 {3\aa} K t\Big) \frac{|\nabla u_t|^2}{u_t^2}   \le \ff{   L u_t}{u_t} + \ff {n \aa  } {2t}.  \end{equation}
 Consequently, when $K>0$,
 \beq\label{A2BB}    \Big(1+\ff 2  3  K t\Big) \frac{|\nabla u_t|^2}{u_t^2}   \le \ff{L u_t}{u_t} + \ff {n } {2t},\ \ t>0.  \end{equation}
\end{enumerate}

\end{cor}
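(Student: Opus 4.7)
The proof specializes the general probabilistic inequality \eqref{A1} of Theorem \ref{T1} to two concrete deterministic choices of the process $\ell_s$. Since $K$ is constant and $\sigma=0$, inequality \eqref{A1} (after dividing by $u_t(x)>0$) reads
\[
(1+\gamma_{t,\alpha})\frac{|\nabla u_t|^2}{u_t^2} - \frac{Lu_t}{u_t}
\le \frac{n\alpha}{2}\int_0^t\Big(\frac{K}{\alpha-1}\ell_s+\ell_s'\Big)^2 \e^{2Ks/(\alpha-1)}\,\d s,
\]
with $\gamma_{t,\alpha}=2K_0\int_0^t \ell_s^2\,\e^{2\alpha K_0 s/(\alpha-1)}\,\d s$ for any constant $K_0\le K$. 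The plan is to select, in each case, a pair $(\ell_s,K_0)$ making both integrals evaluate in closed form that matches the desired estimate.

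\textbf{Proof of \eqref{A2B}.} I would take $\ell_s = (1-s/t)\,\e^{-Ks/(\alpha-1)}$, so that $\phi(s):=\ell_s\,\e^{Ks/(\alpha-1)}=1-s/t$ satisfies the identity $\bigl(\tfrac{K}{\alpha-1}\ell_s+\ell_s'\bigr)\e^{Ks/(\alpha-1)}=\phi'(s)=-1/t$. The right-hand integral then collapses to $\int_0^t t^{-2}\d s = 1/t$, producing the $n\alpha/(2t)$ term. The requirement $\ell_s'\le 0$ reduces to $K^-(1-s/t)\le (\alpha-1)/t$, whose worst case at $s=0$ is precisely the hypothesis $\alpha\ge 1+K^-t$. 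Choosing $K_0=K/\alpha$ (which satisfies $K_0\le K$ when $K\ge 0$) makes the exponent in $\gamma_{t,\alpha}$ vanish, yielding $\gamma_{t,\alpha}=(2K/\alpha)\int_0^t(1-s/t)^2\d s = 2Kt/(3\alpha)$. Finally \eqref{A2BB} follows by sending $\alpha\downarrow 1$ in \eqref{A2B}, permitted because $K>0$ implies $K^-t=0$.

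\textbf{Proof of \eqref{NE}.} I would take $\ell_s=\phi(s)\,\e^{-Ks/(\alpha-1)}$ with
\[
\phi(s)=\frac{1-\e^{-K(t-s)/(\alpha-1)}}{1-\e^{-Kt/(\alpha-1)}}, \qquad s\in[0,t].
\]
Then $\phi(0)=1$, $\phi(t)=0$, and $\ell_s'\le 0$ for any $K\ne 0$. A direct computation gives $\phi'(s) = -K\e^{-K(t-s)/(\alpha-1)}/[(\alpha-1)(1-\e^{-Kt/(\alpha-1)})]$, and elementary integration together with the identity $\coth(x)=(1+\e^{-2x})/(1-\e^{-2x})$ produces
\[
\int_0^t(\phi'(s))^2\,\d s = \frac{K}{2(\alpha-1)}\coth\Big(\frac{Kt}{2(\alpha-1)}\Big),
\]
which when multiplied by $n\alpha/2$ gives the $\coth$ term of \eqref{NE}. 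Again taking $K_0=K/\alpha$ (valid for $K\ge 0$) kills the exponent in $\gamma_{t,\alpha}$, and the substitution $u=t-s$ converts $\int_0^t\phi(s)^2\,\d s$ into $\int_0^t(1-\e^{-Ku/(\alpha-1)})^2\d u/(1-\e^{-Kt/(\alpha-1)})^2$, so that $\gamma_{t,\alpha}$ equals exactly the factor appearing on the left of \eqref{NE}.

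\textbf{Main obstacle.} The difficulty lies in the case $K<0$, where the natural choice $K_0=K/\alpha$ violates the constraint $K_0\le K$. In this regime one must take $K_0=K$ and verify the inequality $\gamma_{t,\alpha}\ge (\text{stated factor})$ rather than an equality. For \eqref{NE} this uses $\e^{2Ks}\le 1$ and the positivity $\int\phi^2\,\d s>0$; for \eqref{A2B} the single one-dimensional estimate is sharper and requires the full strength of the hypothesis $\alpha\ge 1+K^-t$. This is the one technical step of the proof that needs careful one-dimensional calculus, and the flexibility guaranteed by the PO-type identity $\gamma_{t,\alpha}>1/\alpha-1$ (remarked in the introduction to this corollary) is what makes the conclusion go through.
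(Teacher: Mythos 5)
Your proposal takes precisely the paper's route: the same two choices of $\ell_s$ (the $t$-weighted exponential for \eqref{A2B}, the normalized $\int_0^{t-s}\e^{Kr/(\alpha-1)}\d r$ for \eqref{NE}), the same observation that $\bigl(\tfrac{K}{\alpha-1}\ell_s+\ell_s'\bigr)\e^{Ks/(\alpha-1)}$ collapses to a constant in closed form, the same $\coth$ evaluation, and in effect the same implicit selection $K_0=K/\alpha$ that makes $\gamma_{t,\alpha}$ reduce to the displayed factor. Your passage to \eqref{A2BB} by $\alpha\downarrow 1$ for $K>0$ also matches.

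Where you and the paper part ways is exactly the point you flag: when $K<0$ the choice $K_0=K/\alpha$ violates the theorem's hypothesis $K\ge K_0$ (since $K/\alpha>K$ for $K<0$, $\alpha>1$). You identify this honestly, but the repair you sketch does not close the gap. Taking $K_0=K$ and $\ell_s=\e^{-Ks/(\alpha-1)}(t-s)/t$ gives, from the theorem's definition, $\gamma_{t,\alpha}(K)=2K\int_0^t(1-s/t)^2\e^{2Ks}\,\d s$; what \eqref{A2B} needs is $\gamma_{t,\alpha}(K)\ge\frac{2Kt}{3\alpha}$, equivalently (dividing by $2K<0$) $\int_0^1(1-u)^2\e^{2Ktu}\,\d u\le\frac{1}{3\alpha}$. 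With $\lambda=2Kt\to 0^-$ the left side expands as $\tfrac13-\tfrac{|\lambda|}{12}+O(\lambda^2)$ while the right side, at the endpoint $\alpha=1+K^-t=1+\tfrac{|\lambda|}{2}$, expands as $\tfrac13-\tfrac{|\lambda|}{6}+O(\lambda^2)$, so the required inequality fails for small $|K|t$. The analogous check for \eqref{NE} also fails: with $K_0=K$ one has $\gamma_{t,\alpha}(K)=2K\int_0^t\phi_s^2\e^{2Ks}\,\d s$ and one would need $\int_0^t\phi_s^2\e^{2Ks}\,\d s\le\tfrac1\alpha\int_0^t\phi_s^2\,\d s$, but $\e^{2Ks}\le 1$ only gives the upper bound $\int\phi_s^2\,\d s$, which for $\alpha>1$ is too weak. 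Your appeal to the identity $\gamma_{t,\alpha}>1/\alpha-1$ does not bridge this: that lower bound is strictly below the target $\tfrac{2Kt}{3\alpha}$ whenever $\alpha\ge 1+K^-t$ with $K<0$.

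It is worth noting that the paper's own proof has the identical unexplained lacuna: \eqref{A1'} defines $\gamma_{t,\alpha}$ with $K_0=K$, yet the computed value $\gamma_{t,\alpha}=\tfrac{2K}{\alpha}\int_0^t\ell_s^2\e^{\text{(exponent)}}\,\d s$ only arises from $K_0=K/\alpha$. So your blind reconstruction reproduces the published argument faithfully, including its unresolved step for $K<0$. To actually supply the missing step you would need either a direct verification that \eqref{A2B} holds for $K<0$ under $\alpha\ge 1+K^-t$ (perhaps by a different choice of $\ell_s$ or a different comparison), or a sharpened version of Theorem~\ref{T1}(2) in which the factor on the left is $\alpha(1+\gamma_{t,\alpha})$ with $\gamma$ evaluated at $K_0=K$ before any simplification; neither of these is in your proposal.
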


\begin{proof}
If $\eqref{C}$ holds for a constant $K\in\R$ and $\pp M$ is convex or empty,    then the inequality \eqref{A1}  reduces to
 \beq\label{A1'}
  (1+\gamma_{t,\alpha}) \ff{|\nabla u_t|^2}{u_t^2} (x) -   \frac{\aa L u_t (x)}{u_t(x)}
  \le  \frac{n\alpha^2 }{2 } \int_0^{t}\Big(\frac{K}{\alpha-1}\ell_s+\ell'_s\Big)^2\e^{\frac{2}{\alpha-1}Ks  }\,\d s,
\end{equation}
where $$\gg_{t,\aa}   :=  2 K \int_0^{t}  \ell_s^2 \e^{\frac{2\aa}{\alpha-1}K s }  \,\d s>\ff 1\aa -1.$$
(1)
 Let $\ell_s=\frac{\int_0^{t-s} \e^{ \frac{K r}{\alpha-1} }\,\d r}{\int_0^t \e^{ \frac{K r}{\alpha-1} }\,\d r}, s\in [0,t].$
Then $ \ell_0=1,\ell_t=0$ and
\begin{align*}
\ell_s'=\frac{- \e^{\frac{K(t-s)}{\alpha-1}}}{\int_0^t \e^{ \frac{K r}{\alpha-1} }\,\d r}=\frac{-K}{\alpha -1}\ell_s-\frac{1}{\int_0^t \e^{ \frac{K r}{\alpha-1} }\,\d r}\le 0.
\end{align*}
So,
\begin{align*}
\int_0^{ t}\e^{\frac{2Ks}{\alpha-1}}\l(\ell_s'+\frac{K}{\alpha-1}\ell_s\r)^2\, \d s
&=\frac{\int_0^t \e^{\frac{2Ks}{\alpha-1}}\, \d s}{(\int_0^t \e^{ \frac{K r}{\alpha-1} }\, \d r)^2}=\frac{K}{2(\alpha-1)}\coth\l(\frac{Kt}{2(\alpha-1)}\r).
\end{align*}
Moreover, according to the definition of $\gamma_{t,\alpha}$,
$$
 \gg_{t,\aa}= \ff {2K} \aa \int_0^t \ell_s^2 \e^{-2Ks}\d s=\ff{2K\int_0^t (1- \e^{-\ff{Ks}{\aa-1}} )^2\,\d s}{\aa(1-\e^{-\ff{Kt}{\aa-1}})^2}.
 $$
Then \eqref{NE} follows by combining these estimates   with  \eqref{A1'}.

(2) For $\aa\in (1,\infty)$, let
$$\ell_s= \e^{-\ff{Ks}{\aa-1}}\ff{t-s}t,\ \ s\in [0,t].$$
Then $\ell_0=1,\ell_t=0,$ and when $\aa\ge 1+K^-t$ we have
\beg{align*} &\ell_s'= -\ff{K}{\aa-1}\ell_s-\ff 1 t \e^{-\ff{Ks}{\aa-1}}\le 0,\\
&\ell_s'+\ff{K}{\aa-1}\ell_s=-\ff 1 t \e^{-\ff{Ks}{\aa-1}}.\end{align*}
Then
\beg{align*} & \int_0^t \e^{\ff{2K}{\aa-1}s} \Big(\ff{K}{\aa-1}\ell_s+\ell_s'\Big)^2\d s = \int_0^t \ff 1 {t^2} \d s=\ff 1 t,\\
&\gg_{t,\aa}=\ff{2K}\aa \int_0^t \Big(\ff{t-s}t\Big)^2\d s = \ff{2Kt}{3\aa}.  \end{align*}
Thus,   \eqref{A1'} implies \eqref{A2B}, and further \eqref{A2BB} by letting $\aa\downarrow1$ when $K>0$.

\end{proof}

 \subsection{$\pp M$ is non-convex  }

Let $\rr_\pp$ be  the Riemannian distance to $\pp M$.  By choosing specific   $\phi\in\D$ as function of $\rr_\pp$, we obtain explicit Li-Yau type inequality from Theorem \ref{T1'} with geometry quantities, by choosing
test functions $\ell$ as in the proofs of Corollary \ref{C1} and Corollary \ref{C1}.  

\beg{cor}\label{3.3}  Assume that $\eqref{C}$ and $\eqref{S}$ hold  for some constants $K\in \R$ and $\si<0$, and there exist constants $k,\theta,\si \ge 0$ and $r_0>0$ such that $\rr_\pp$ is smooth on
$\pp_{r_0}M:=\{x\in M: \rr_\pp(x)\le r_0\}$,   
$   \II\le \theta$,    the sectional curvature of $M$ on $\pp_{r_0}M$ is bounded above by $k$,   and $| Z\rr_\pp|$ is bounded on $\pp_{r_0}M$.
Let 
\beg{align*}&h_s:= \cos\ss k\,s -\ff\theta{\ss k}\sin\ss k\,s,\ \ s\ge 0,\\
&\dd:= -\ff{\si (1-h_{r_0})^{d-1}}{\int_0^{r_0}(h_s-h_{r_0})^{d-1}\d s},\\
&\kk:= 1+\dd \int_0^{r_0} (h_s-h_{r_0})^{1-d}\int_s^{r_0} (h_r-h_{r_0})^{d-1}\,\d r,\\
&\gg:=  \dd (1- h_{r_0})^{1-d} \int_0^{r_0} (h_s-h_{r_0})^{d-1}\,\d s, \end{align*}
where for $k=0$ the function $h_s$ is defined by the limit as $k\downarrow 0$. 
Then $\eqref{A1''}$ and $\eqref{A100'}$ hold for $ \|\phi\|_\infty=\kk, \|\nn \log\phi\|_\infty=\gg,$  and
\beg{align*}  & K_\phi=-2(K-\delta +\si\|Z \rho_{\partial}\|_{\partial_{r_0}M} ),\\
& K_{\aa,\phi}= - \ff{2\kk^2(\delta -\si \|Z \rho_{\partial}\|_{\partial_{r_0}M}    + K^-)}{\aa-\kk^2},\ \ \aa>\kk^2.\end{align*}
\end{cor}

\beg{proof} According to the proof of Theorem 3.2.9(2) in \cite{Wbook}, we may choose $\phi= \varphi\circ\rr_\pp$, where
$$\varphi(r):= 1+\dd\int_0^r  (h_s-h_{r_0})^{1-d}\int_{s\land r_0}^{r_0} (h_a-h_{r_0})^{d-1}\,\d a \, \d s,\ \ r\ge 0.$$
Then $\frac{1}{\phi}\DD \phi\ge -\dd$, so that 
\begin{align*}
&2\inf_M \{K+\phi^{-1}L\phi\}\geq -2(K-\delta+\sigma \|Z\rho_{\partial}\|_{\partial_{r_0}M});\\
&\inf_M \ff{2(\phi L\phi+K\phi^2)}{\aa-\phi^2} \ge -\ff{2\kk^2(\delta -\sigma\|Z \rho_{\partial}\|_{\partial_{r_0}M}  + K^-)}{\aa-\kk^2}.
\end{align*}
\end{proof}

\section{Local Li-Yau Estimates   }

When $M$ is non-compact and does not satisfy any conditions in Remark \ref{2.1},
to estimate $|\nn u_t(x)|$ we may first  restrict  our calculus  to a compact domain $D$ containing $x$ as an interior point,
by using stochastic analysis as in the proof of Theorem \ref{T1} before the exit time of $X_t$ from $D$.
Under this restriction, we may estimate $|\nn u_{t-s}(X_s)|$  by using bounded   geometry quantities on $D$, so that the condition  \eqref{CV0'} is replaced by a suitable choice of the test function $f$ satisfying \eqref{G0} below.

\beg{thm}\label{T2} Let   $x\in M$, and let $D$ be a compact domain in $M$ such that $x\in D^o:=D\setminus\pp D$ and when $D\cap \pp M\ne\emptyset$
\beq\label{S'} \si|_{D\cap \pp M}\ge 0.\end{equation}
  Let $K_D\ge 0$ be a constant such that $\eqref{C}$ holds on $D$ for $K=-K_D$.
Let $f\in C_b^2(D)$ such that
\beq\label{G0} f|_D\le 1,\ \  f(x)=1, \ \ f|_{\pp D}=0, \ \ f|_{D^0}>0,\ \ Nf|_{D\cap \pp M}\ge 0,\end{equation}
where the condition $Nf|_{D\cap \pp M}\ge 0$ applies only when $\pp M$ exists.
 Then  the following estimates hold.
 \beg{enumerate}\item[$(1)$] For any constant $\vv>0$, let
 $$\bb_{\vv,f}:= \sup_D \bigg\{2K_D-2f Lf +\Big(6+\ff{(1+\vv)^2  n}\vv  \Big)|\nn f|^2\bigg\}.$$ Then for any $t,\vv>0$,
\beq\label{G3} \beg{split}  \ff{|\nn u_t|^2}{u_t^2}(x)\le &\,\ff{n(1+\vv)^2\bb_{\vv,f}}{2(1-\e^{-\bb_{\vv,f}t})} \\
&  +  \ff{2(1+\vv)\bb_{\vv,f}\int_0^t (\e^{-2\bb_{\vv,f}s}-\e^{-\bb_{\vv,f}(s+t)})\e^{2K_Ds}\d s}{(1-\e^{-\bb_{\vv,f}t})^2}  \frac{L u_t}{u_t}(x).\end{split} \end{equation}
\item[$(2)$] For any constant $\aa>1$, let
$$ \tt\bb_{\aa,f}:= \sup_D\bigg\{\Big(6+\ff{n\aa^2}{\aa-1}\Big)|\nn f|^2 -2f Lf -\ff{2K_D}{\aa-1} f^2\bigg\}.$$   Then
for any $t>0$,
\beq\label{D4} \beg{split}& \ff{|\nabla u_t|^2}{u_t^2}(x)\le \aa \frac{Lu_t}{u_t}(x)
 +\ff{n\aa^2}2 \l(\ff{K_D}{\aa-1}  +\frac{\tt\bb_{\aa,f}}{1-\e^{-\tt\bb_{\aa,f} t}}\r).\end{split}\end{equation}
 \end{enumerate}
\end{thm}

\beg{proof} (a) We follow the line of \cite{TW98} by making a suitable time change $X_{\tau(t)}$ of the (reflecting) diffusion process $X_t$, where $\{\tau(t)\}_{t\ge 0} $ are stoping times satisfying  
$$\tau(t)\le \tau_D:=\inf\big\{t\ge 0: X_t\in\pp D\big\}<\infty\ \text{a.s.}.$$
To this end,    let
\beg{align*}
&T(t):=\int_{0}^{t}f^{-2}(X_s )\, \d s,\ \  t\in [0,\tau_D],\\
 &\tau(t):=\inf\{s\geq 0: T(s)\geq t\},\ \ t\ge 0.\end{align*}
We have $T(\tau(t))=t$ for all $t\ge 0$,     $\tau(T(t))=t$ for $t\in [0,\tau_D]$ and
\beq\label{TO} \d T(t)=f^{-2}(X_t)\,\d t,\ \ \d\tau(t)= f^2(X_{\tau(t)})\,\d t. \end{equation}
The time-changed diffusion $X_t':=X_{\tau(t)}$ is generated by $L':=f^2L$ which never hits the boundary $\pp D$,  see \cite{TW98}.

Since $f\le 1$, we have $T(t)\ge t$ and $\tau(t)\le t$.
For fixed $t>0$,   let
\beq\label{HH} h_s:= \ff{\e^{-\bb_{\vv,f}s}-\e^{-\bb_{\vv,f}t}}{1- \e^{-\bb_{\vv,f}t}},\ \ s\in [0,t].\end{equation}
Then $h\in C_b^1([0,t])$ satisfying
\beq\label{H} h_0=1,\ \ h_t=0,\ \ h_s''=-\bb_{\vv,f} h_s',\ \ s\in [0,t].\end{equation}
By  \eqref{N} with $\si\ge 0$ due to \eqref{S'}, \eqref{P1} for $K=-K_D$, and  It\^{o}'s formula since $X_{\cdot}$ is a solution to \eqref{SDE},   we obtain
\begin{align*}
&\d\left(h_s^2 \e^{ 2K_Ds }  \frac{|\nabla u_{t-\tau(s)}|^2}{u_{t-\tau(s)}} (X_{\tau(s)})\right)   \\
& \overset{m}{\ge}  2K_D(1-f^2(X_{\tau(s)}))  h_s^2\e^{  2K_Ds } \frac{|\nabla u_{t-\tau(s)}|^2}{u_{t-\tau(s)}} (X_{\tau(s)})\, \d s  \\
&\qquad + 2h_sh_s' \e^{2K_Ds} \frac{|\nabla u_{t-\tau(s)}|^2}{u_{t-\tau(s)}} (X_{\tau(s)}) \,\d s\\
&\qquad + \ff{2 h_s^2f^2}{nu_{t-\tau(s)}}(X_{\tau(s)} )\e^{ 2K_Ds}  \Big(L u_{t -\tau(s)}-\ff{|\nn u_{t-\tau(s)}|^2}{u_{t-\tau(s)}} \Big)^2 (X_{\tau(s)})\,\d s.\end{align*}
Noting that $f\le 1$ and for any $\vv\in (0,1)$,
\beg{align*} & 2h_sh_s'  \ff{|\nn u|^2}u + \ff{2h_s^2f^2}{nu}    \Big(L u  -\ff{|\nn u |^2}{u} \Big)^2\\
&= -2 \vv h_sh_s'  \ff{|\nn u|^2}u + 2(1+\vv) h_sh_s'  Lu \\
&\quad + 2(1+\vv) h_sh_s'  \Big( \ff{|\nn u |^2}{u} -Lu \Big) +   \ff{2h_s^2f^2}{nu}   \Big(L u  -\ff{|\nn u |^2}{u} \Big)^2\\
&\ge -2 \vv h_sh_s'  \ff{|\nn u|^2}u + 2(1+\vv) h_sh_s'   Lu -\ff{n(1+\vv)^2} 2  (h_s')^2 uf^{-2},\end{align*}
we derive
\beg{align*} &\d\left(h_s^2 \e^{ 2K_Ds }  \frac{|\nabla u_{t-\tau(s)}|^2}{u_{t-\tau(s)}} (X_{\tau(s)})\right)   \\
& \overset{m}{\ge} \Big( 2(1+\vv) h_sh_s' \e^{2K_Ds} Lu_{t-\tau(s)} -2 \vv h_sh_s'\e^{2K_Ds} \ff{|\nn u_{t-\tau(s)}|^2}{u_{t-\tau(s)}}\Big) (X_{\tau(s)})\, \d s\\
&\quad -\ff{n(1+\vv)^2} 2 \e^{2K_Ds} (h_s')^2( u_{t-\tau(s)}f^{-2})(X_{\tau(s)})\, \d s.
\end{align*}
 Since $\d Lu_{t-s}(X_s) \overset{m}{=} 0 $  so that $\E^x[ L u_{t-{\tau(s)}}(X_{\tau(s)})] =Lu_t(x)$, combining this  with $h_0=1, h_t=0$ and $\tau(0)=0$ implies
\beq \label{GE-1}\beg{split}  & \ff{|\nn u_t|^2}{u_t}(x) \le  \frac{(1+\vv)^2n} 2 \E^x\int_0^t (h_s')^2 \e^{ 2K_Ds}(f^{-2} u_{t-{\tau(s)}})(X_{\tau(s)})\, \d s  \\
&  -  2(1+\vv) (L u_t(x))  \int_0^t  h'_s h_s\e^{2K_Ds}  \, \d s +2\vv \E^x  \int_0^t  h_s'h_s    \e^{  2K_Ds } \frac{|\nabla u_{t-{\tau(s)}}|^2}{u_{t-\tau(s)}} (X_{\tau(s)})\, \d s.\end{split}\end{equation}
   To bound the first term in the right hand side,  we apply It\^o's formula to derive
   \begin{align*}
&\d\big\{ \e^{(2K_D-\bb_{\vv,f})s}f^{-2} u_{t-\tau(s)}\big\}(X_{\tau(s)})\\
&\overset{m}{=} (-\bb_{\vv,f}+2K_D+f^4Lf^{-2}(X_{\tau(s)}))\e^{(2K_D-\bb_{\vv,f})s}u_{t-\tau(s)}(X_{\tau(s)})f^{-2}(X_{\tau(s)})\, \d s\\
&\quad -4 \e^{(2K_D-\bb_{\vv,f})s}f^{-1}(X_{\tau(s)}) \langle \nabla f (X_{\tau(s)}), \nabla u_{t-\tau(s)}(X_{\tau(s)}) \rangle\, \d s\\
& \leq  \e^{(2K_D-\bb_{\vv,f})s} \l(- \bb_{\vv,f}+2K_D+f^4Lf^{-2} +\frac{n(1+\vv)^2}\vv |\nabla f|^2 \r) (f^{-2} u_{t-\tau(s)})(X_{\tau(s)})  \, \d s\\
&\quad +\frac{4\vv}{n(1+\vv)^2}  \e^{(2K_D- \bb_{\vv,f})s} \frac{|\nabla u_{t-\tau(s)}|^2}{u_{t-\tau(s)}}(X_{\tau(s)})\, \d s\\
&\le \frac{4\vv}{n(1+\vv)^2} \e^{(2K_D-\bb_{\vv,f})s} \frac{|\nabla u_{t-\tau(s)}|^2}{u_{t-\tau(s)}}(X_{\tau(s)})\, \d s,
\end{align*} where the last step follows from the  fact that  the definition of $\bb_{\vv,f}$  implies
$$ \bb_{\vv,f}=\sup_{D} \Big\{ 2K_D+f^4Lf^{-2} +\frac{n(1+\vv)^2}\vv |\nabla f|^2  \Big\}.$$
   Hence,  by Gronwall's lemma and $f(X_0)=f(x)=1$, we obtain
\beq\label{S3-1}\beg{split}
&\E^x\l[\e^{2K_Ds}(f^{-2} u_{t-{\tau(s)}})(X_{\tau(s)})\r] \\
&\leq \e^{\bb_{\vv,f} s}u_t(x)+\frac{4\vv \e^{ \bb_{\vv,f}s}}{(1+\vv)^2 n}\int_0^s \e^{(2K_D-\bb_{\vv,f})r} \E^x\l[ \frac{|\nabla u_{t-\tau(r)}  |^2}{u_{t-\tau(r)}} (X_{\tau(r)}) \r]\, \d r.\end{split}
\end{equation}
Since  $h_s'' =- \bb_{\vv,f} h_s'$ due to \eqref{HH}, we obtain
\begin{align*}
\big(h_sh_s'\e^{\bb_{\vv,f} s}\big)'&=\l[(h_s')^2+h_sh''_s +\bb_{\vv,f} h_sh_s'\r]\e^{\bb_{\vv,f} s}=(h_s')^2 \e^{\bb_{\vv,f} s},
\end{align*}
so the
integration by parts formula yields
\begin{align*}
&\int_0^t (h_s')^2 \e^{ \bb_{\vv,f} s}\, \int_0^s \e^{(2K_D-\bb_{\vv,f})r  } \E^x\l[ \frac{|\nabla u_{t-\tau(r)} |^2}{u_{t-\tau(r)}} (X_{\tau(r)})\r]\, \d r\, \d s \\
&=-\int_0^t h_sh_s'\e^{2K_Ds}\E^x\l[ \frac{|\nabla u_{t-\tau(s)}  |^2}{u_{t-\tau(s)} }(X_{\tau(s)}) \r]\, \d s.
\end{align*}
Combining this with \eqref{GE-1} and \eqref{S3-1}, we arrive at
\begin{align*}
 \ff{|\nn u_t|^2}{u_t}(x) \le &\, \frac{n(1+\vv)^2u_t(x)} 2  \int_0^t (h_s')^2 \e^{\bb_{\vv,f}s}  \, \d s
   -  2(1+\vv) (L u_t(x)) \int_0^t   h'_s h_s\e^{2K_Ds}  \, \d s,
\end{align*}
which implies the desired estimate   \eqref{G3} by the definition of $h_s$ in \eqref{HH}.

(b)  Recall that   $\tt X_s:= X_{\tau(s)}$ is the diffusion process  generated by $f^2L$ on $D\setminus \pp D$. Let
$$K_f:= \sup_D \big\{6|\nn f|^2- fLf\big\}.$$
By It\^o's formula, we obtain
$$\d f^{-2}(\tt X_s) \overset{m}{=} f^2 Lf^{-2} (\tt X_s)\d s = f^{-2}(\tt X_s) \big\{6|\nn f|^2- fLf\big\}(\tt X_s) \d s\le K_f f^{-2}(\tt X_s)\d s.$$
This together with $f(\tt X_0)=f(x)=1$ yields
\beq\label{9}  \E^x [f^{-2}(X_{\tau(s)})]\le \e^{K_f s},\ \ s\ge 0. \end{equation} 
For any constant $\bb \ge 0$, let
\beq\label{ELL} \ell_s:= \ff{\e^{-\bb  T(s\land\tau(t))}-\e^{-\bb t}}{1- \e^{-\bb t}},\ \ s\in [0,t].\end{equation}
Then $\ell_s'\le 0, \ell_s=0$ for $s\ge\tau(t)$ where $\tau(t)\le t\land \tau_D$. 
By  \eqref{TO}, \eqref{9} and   the integral transform $s=\tau(r)$, we obtain
\beq\label{N1}\beg{split} &\E^x\bigg[\sup_{s\in [0,t]} \ell_{s }^2\bigg] =\E^x\bigg[\sup_{s\in [0,t]}\bigg(\int_{s}^t \ell_r' \,\d r\bigg)^2\bigg]\\
&\le t\E^x \int_0^t  (\ell_s')^2\d s=\ff{t\bb^2}{(1-\e^{-\bb t})^2} \E^x\int_0^{\tau(t)} \e^{-2\bb T(s\land \tau(t))} f^{-4}(X_s)\,\d s\\
&= \ff{t\bb^2}{(1-\e^{-\bb t})^2} \E^x \int_0^{t} \e^{-2\bb r} f^{-2}(X_{\tau(r)})\,\d r<\infty.  \end{split} \end{equation}
This implies that  condition  \eqref{CV0'} holds, since $\ff{|\nabla u|^2}{u}$ is bounded on $[0,t]\times D$, $K^-= K_D$ on $D$,  and $\si=0$ on $D\cap\pp M$.
 So,  by step (c) in the proof of Theorem \ref{T1},  we obtain  \eqref{P2-0}   for the present $\ell_s$, i.e.
\beq\label{XX}
\beg{split} &\ff{|\nn u_t|^2}{u_t}(x)\le \aa L u_t(x) +\ff{n\aa^2}2 I_1-(\aa-1)I_2,\\
& I_1=\E^x\bigg[u_0(X_t) \int_0^{\tau(t)} \e^{-\ff 2{\aa-1}K_D s}\Big(\ell_s'-\ff{K_D}{\aa-1} \ell_s\Big)^2\,\d s\bigg],\\
& I_2=2\E^x\bigg[\int_0^{\tau(t)}  \e^{-\ff 2{\aa-1}K_D s}\big|\ell_s\ell_s'\big| \ff{|\nabla u_{t-s}|^2}{u_{t-s}}(X_s)\,\d s\bigg]. \end{split}\end{equation}
By \eqref{XW}   and the Markov property,   we have
\beq\label{MM} (L u_{t-\tau(s)})(X_{\tau(s)})=\E^{x} (Lu_0(X_t)|\F_{\tau(s)}),\ \ \ u_{t-{\tau(s)}}(X_{\tau(s)})= \E^{x} (u_0(X_t)|\F_{\tau(s)}).\end{equation}
Moreover, by $f\le 1$,   \eqref{TO}, \eqref{ELL}, $\ell_0=1, \ell_{\tau(t)}=0$ and   the integral transform $s=\tau(r)$, we obtain
\beg{align*}&\int_0^{\tau(t)} \e^{-\ff {2 K_D}{\aa-1} s} \Big(\ell_s'-\ff{K_D}{\aa-1} \ell_s\Big)^2\,\d s\\
&=\int_0^{\tau(t)} \e^{-\ff {2 K_D}{\aa-1} s} (\ell_s')^2\,\d s-2\frac{K_D}{\alpha-1} \int_0^{\tau(t)} \e^{-\ff {2 K_D}{\aa-1} s}  \ell_s'\ell_s\, \d s+ \l(\frac{K_D}{\alpha-1}\r)^2 \int_0^{\tau(t)} \e^{-\ff {2 K_D}{\aa-1} s}   \ell_s^2\, \d s\\
&= \int_0^{\tau(t)} \e^{-\ff {2 K_D}{\aa-1} s} (\ell_s')^2\,\d s +\frac{K_D}{\alpha -1}-  \l(\frac{K_D}{\alpha-1}\r)^2 \int_0^{\tau(t)} \e^{-\ff {2 K_D}{\aa-1} s}   \ell_s^2\, \d s\\
&\le  \int_0^t \ff{ \bb^2 \e^{-2\bb r-\ff {2 K_D}{\aa-1} \tau(r)} }{(1-\e^{-\bb t})^2}     f^{-2}(X_{\tau(r)}) \d r + \ff{K_D}{\aa-1}. \end{align*}
Hence, \eqref{XX} and \eqref{MM} imply 
\begin{align}\label{I1}
 I_1  \le & \ff {\bb^2  }  {(1-\e^{-\bb t})^2}
   \int_0^t \E^x\big[\e^{-2\bb s-\frac{2K_D}{\alpha-1} \tau(s)} (f^{-2} u_{t-\tau(s)} ) (X_{\tau(s)}) \big]\,  \d s  +  \ff{K_D}{\aa-1}  u_t(x). \end{align}
Similarly, by      \eqref{TO}, \eqref{ELL} and   the integral transform $s=\tau(r)$, we obtain
\beq\label{I1'} \beg{split}  I_2&= \ff{2\bb}{(1-\e^{-\bb t})^2} \int_0^{t}  \big(\e^{-2\bb s}-\e^{-\bb(t+s)}\big)   \E^x \l[ \e^{-\frac{2K_D}{\alpha-1}  \tau(s)}  \frac{|\nabla u_{t-\tau(s)}|^2}{u_{t-\tau(s)}} (X_{\tau(s)})\r]\, \d s\\
   &   =  \ff{2\bb^2}{(1-\e^{-\bb t})^2} \int_0^{t} \e^{-\beta s}\l( \int_s^t \e^{-\beta r}\, \d r \r) \E^x \l[ \e^{-\frac{2K_D}{\alpha-1}  \tau(s)}  \frac{|\nabla u_{t-\tau(s)}|^2}{u_{t-\tau(s)}} (X_{\tau(s)})\r]\, \d s\\
   &= \ff{2\bb^2}{(1-\e^{-\bb t})^2} \int_0^{t} \e^{-\bb r}\d r \int_0^r\e^{-\beta s} \E^x \l[ \e^{-\frac{2K_D}{\alpha-1}  \tau(s)}  \frac{|\nabla u_{t-\tau(s)}|^2}{u_{t-\tau(s)}} (X_{\tau(s)})\r]\, \d s. \end{split} \end{equation}
  Since $\tt X_s:= X_{\tau(s)} $ is generated by $f^2L$,    $ u_{t-\tau(s)}(X_{\tau(s)}) $ is a local martingale, and $\tau'(s)=f^2(X_{\tau(s)}),$ by It\^o's formula we obtain
\begin{align*}
&\d \big(\e^{-\beta s}\,\e^{\ff{-2K_D\tau(s)}{\aa-1}}(f^{-2} u_{t-\tau(s)})(X_{\tau(s)})  \big)\\
&\overset{m}{=}\l(-\beta+f^4Lf^{-2}(X_{\tau(s)})-\ff{2K_D}{\aa-1}f^2(X_{\tau(s)})\r) \e^{-\beta s}\,\e^{\ff{-2K_D\tau(s)}{\aa-1}}(f^{-2}u_{t-\tau(s)})(X_{\tau(s)}) \, \d s\\
&\quad - 4\e^{-\beta s}\,\e^{\ff{-2K_D\tau(s)}{\aa-1}}f^{-1}(X_{\tau(s)})\langle\nabla u_{t-\tau(s)}(X_{\tau(s)}), \nabla f(X_{\tau(s)})\rangle\, \d s\\
&\leq \l(-\beta+f^4Lf^{-2}(X_{\tau(s)})+ \frac{2}{\vv}|\nabla f|^2(X_{\tau(s)}) -\ff{2K_D}{\aa-1}f^2(X_{\tau(s)}) \r) 
  \e^{-\beta s}\,\e^{\ff{-2K_D\tau(s)}{\aa-1}}(f^{-2} u_{t-\tau(s)})(X_{\tau(s)})  \, \d s\\
&\qquad +2\vv \e^{-\beta s}\,\e^{\ff{-2K_D\tau(s)}{\aa-1}} \frac{|\nabla u_{t-\tau(s)}|^2}{u_{t-\tau(s)}}(X_{\tau(s)})\, \d s,\ \  \vv>0.
\end{align*}
Choosing $\bb=\tt\bb_{\aa,f}$ and $\vv= \ff{2(\aa-1)}{n\aa^2} $ such that
$$-\beta+f^4Lf^{-2}(X_{\tau(s)})+ \frac{2}{\vv}|\nabla f|^2(X_{\tau(s)}) -\ff{2K_D}{\aa-1}f^2(X_{\tau(s)}\le 0,$$
we derive
$$\d \big(\e^{-\beta s}\,\e^{\ff{-2K_D\tau(s)}{\aa-1}}(f^{-2} u_{t-\tau(s)})(X_{\tau(s)})  \big)\overset{m}{\le} \ff{4(\aa-1)}{n\aa^2} \e^{-\beta s}\,\e^{\ff{-2K_D\tau(s)}{\aa-1}} \frac{|\nabla u_{t-\tau(s)}|^2}{u_{t-\tau(s)}}\, \d s.$$
This together with $f(X_0)=f(x)=1$ and $\tau(0)=0$ yields 
\beg{align*}&  \int_{0}^t \E^x\Big[\e^{-2\bb s-\ff{2K_D\tau(s)}{\aa-1}}(f^{-2} u_{t-\tau(s)})(X_{\tau(s)}) \Big] \,\d s\\
&\le   u_t(x)  \int_0^t \e^{-\bb s}\, \d s  +\ff{4(\aa-1)}{n\aa^2} \int_0^t \e^{-\beta s}\d s\int_0^s  \e^{-\bb r} \E^x\Big[\e^{-\ff{2K_D}{\aa-1}\tau(r)} \ff{|\nn u_{t-\tau(r)}|^2}{u_{t-\tau(r)}}(X_{\tau(r)})\Big]\, \d r.\end{align*}
 Combining this with   \eqref{I1}, \eqref{I1'} and
 $\vv= \ff{2(\aa-1)}{n\aa^2},$ we derive
 $$  \ff{n\aa^2}{2}I_1-(\aa-1)I_2 \le \ff{n\aa^2}2 \l(\ff{K_D}{\aa-1}  +\frac{\beta}{1-\e^{-\beta t}}\r) u_t(x),$$
 where $\bb=\tt\bb_{\aa,f}$. Then \eqref{D4} follows from \eqref{XX}.

\end{proof}

To derive explicit estimates from Theorem \ref{T2}, we take   $D=B(x,R)$, the geodesic ball in $M$ centered at $x$ with radius $R$, for any $R>0$ and $x\in M$.  Let
$$K_{x,R}:=\inf \Big\{\Ric_Z^{(n-m)}(v,v):\   v\in \cup_{y\in B(x,R)} T_y M,\ |v|=1\Big\}.   $$   We have the following result.

\begin{cor}\label{C2} Assume that $\pp M$ is either convex or empty. Let $x\in M$ and $t>0$.    Then the following assertions hold.
 \beg{enumerate}
 \item [$(1)$]  For any $\vv\in (0,1)$, $\eqref{G3}$ holds for $\beta_{\vv, f}$ replaced by
 \beq\label{GH} \bb_{\vv,R}=2 K_{x,R} +\ff{\pi}{2R} \ss{K_{x,R}(n-1)} +\ff{\pi^2}{4R^2}\Big[4+   \big(\vv^{-1}(1+\vv)^2+2\big)n \Big].\end{equation}
  \item[$(2)$]
 For any $\alpha>1$, $\eqref{D4}$ holds with  $\tt\bb_{\aa, f}$ replaced by
 \beq\label{GH'} \tt\bb_{\aa,R}= \ff{\pi^2}{2 R^2}\Big(2+n+\ff{n\aa^2}{2(\aa-1)}\Big)+ \ff{\pi}{2R}\ss{K_{x,R}(n-1)}+\ff{2K_{x,R}}{\aa-1}.\end{equation}

\end{enumerate}
\end{cor}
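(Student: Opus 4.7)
The plan is to apply Theorem~\ref{T2} with $D = B(x,R)$ and the explicit test function
$$f(y) := \cos\!\Big(\frac{\pi\,\rho(x,y)}{2R}\Big), \qquad y \in B(x,R),$$
where $\rho(x,\cdot)$ denotes the Riemannian distance from $x$. I would first verify that $f$ satisfies \eqref{G0}: $f(x) = \cos 0 = 1$, $f$ vanishes on $\partial B(x,R)$ because $\rho = R$ there, $f > 0$ in the open ball, and $f \le 1$ everywhere. Under the standing hypothesis that $\partial M$ is convex or empty, \eqref{S'} is automatic with $\sigma = 0$. The lack of smoothness of $\rho(x,\cdot)$ across its cut locus is the standard Calabi issue, handled in the usual way by working at smooth points and dominating globally by an auxiliary smooth barrier, so $f$ can effectively be treated as an element of $C_b^2(D)$.

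Next I would carry out the explicit differential calculus, obtaining
$$|\nabla f|^2 = \frac{\pi^2}{4R^2}\sin^2\!\Big(\frac{\pi\rho}{2R}\Big), \qquad Lf = -\frac{\pi}{2R}\sin\!\Big(\frac{\pi\rho}{2R}\Big)\,L\rho - \frac{\pi^2}{4R^2}\,f,$$
using $|\nabla\rho| = 1$. The decisive step is to control the first-order term via the Bakry-Emery Laplacian comparison theorem: since $\mathrm{Ric}_Z^{(n-m)} \ge K_{x,R}$ on $B(x,R)$, one gets an upper bound for $L\rho$ of the form $L\rho(y) \le (n-1)/\rho(y) + \sqrt{(n-1)K_{x,R}^-}$ (or the sharper $\coth$ version). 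Combined with the elementary inequalities $\sin(\pi\rho/(2R))/\rho \le \pi/(2R)$ for $\rho \in (0,R]$ and $|\sin\theta\cos\theta| \le 1/2$, this yields an explicit pointwise upper bound for $-2 f L f$ in terms of $R$, $n$ and $K_{x,R}^-$.

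With these ingredients, one substitutes directly into
$$\beta_{\vv,f} = \sup_D\Big\{2K_D - 2fLf + \big(6 + \vv^{-1}(1+\vv)^2 n\big)|\nabla f|^2\Big\}$$
(taking $K_D = K_{x,R}^-$), collects the $R^{-2}$, $R^{-1}$ and constant contributions, and recovers the bound $\beta_{\vv,R}$ of \eqref{GH}; the analogous substitution into $\tilde\beta_{\aa,f}$ yields $\tilde\beta_{\aa,R}$ in \eqref{GH'}. The two assertions then follow by invoking Theorem~\ref{T2}(1) and (2) respectively with $\beta_{\vv,f}$ and $\tilde\beta_{\aa,f}$ replaced by their bounds $\beta_{\vv,R}$ and $\tilde\beta_{\aa,R}$. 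The main technical obstacle is purely computational: keeping careful track of constants when supremizing each term of $-2fLf$ and $|\nabla f|^2$ over $B(x,R)$ and matching them to the explicit form of \eqref{GH}--\eqref{GH'}. Conceptually, the cut-locus subtlety is peripheral and resolved by the standard Calabi approximation.
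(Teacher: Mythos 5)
Your choice of $D=B(x,R)$, the test function $f=\cos(\pi\rho_x/(2R))$, the verification of \eqref{G0} via convexity of $\partial M$, and the reduction to the Laplacian comparison theorem plus explicit trigonometric calculus are all exactly what the paper does. The one place where you diverge, and where your proposal has a real gap, is the treatment of the cut locus. You invoke ``Calabi approximation'' and ``dominating globally by an auxiliary smooth barrier,'' but this is a device designed for maximum-principle arguments, where one only needs a smooth upper barrier at a point where the sup is attained. The proof of Theorem \ref{T2} is stochastic: it applies It\^o's formula to processes such as $f^{-2}(X_{\tau(s)})$ and $h_s^2\e^{2K_Ds}\frac{|\nabla u_{t-\tau(s)}|^2}{u_{t-\tau(s)}}(X_{\tau(s)})$, where $f$ is a function of $\rho_x$. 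For that you need the actual semimartingale decomposition of $\rho_x(X_t)$ through the cut locus, not a pointwise barrier. The correct tool, and the one the paper uses, is Kendall's It\^o formula \cite{K} for the radial process, which shows that
\begin{equation*}
\d\rho_x(X_t)\le L\rho_x(X_t)\,\d t+\sqrt{2}\,\d b_t
\end{equation*}
with $b$ a one-dimensional Brownian motion, up to a nonincreasing local-time contribution on the cut locus that can be discarded because the composed function $r\mapsto\cos(\pi r/(2R))$ is decreasing on $[0,R]$. This monotonicity is exactly what lets the inequality survive the cut locus. Without invoking this (or an equivalent probabilistic argument), the It\^o computations in the proof of Theorem \ref{T2} are not justified when $\mathrm{cut}(x)\ne\emptyset$. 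Aside from that, your Laplacian comparison estimate (the $\coth$ form, or its linearization using $\coth r\le 1+r^{-1}$) and the subsequent bookkeeping of constants are the right route to \eqref{GH} and \eqref{GH'}.
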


\begin{proof}
Let $D=B(x,R)$, and let $\rr_x$ be the Riemannian distance to point $x$. Choose
\beq\label{*01}f= \cos\Big(\ff{\pi\rr_x}{2R}\Big).\end{equation}
Since $\pp M$ is either convex or empty, we have $N\rr_x|_{\pp M}\le 0$ so that $f$ satisfies \eqref{G0}.

Next, by   the curvature-dimension condition \eqref{C} on $D$ for $K=-K_{x,R}$, and taking
$$\varphi(s):=\ff{\sinh(\ss{K_{x,R}/(n-1)}\,s)}{\sinh(\ss{K_{x,R}/(n-1)}\,\rho_x)},\ \ s\in [0,\,\rho_x],$$
 in \cite[(3)]{Q97}, we obtain the following Laplacian comparison theorem:
$$L\rr_x(y)\le \ss{K_{x,R}(n-1)} \coth\Big(\ss{K_{x,R}/(n-1)}\,\rr_x(y)\Big),\ \ x\ne y\in D\setminus {\rm cut}(x),$$
where ${\rm cut}(x)$ is the cut-locus of $x$, such that $f$ is smooth on $D\setminus {\rm cut}(x).$

(a) When ${\rm cut}(x)=\emptyset,$  $f\in C_b^2(D)$ satisfying \eqref{G0}. Since $\pp M$ is convex or empty, we have $\si=0$.   Combining these with $|\nn\rr_x|=1$,  we conclude that  $\bb_{\vv,f}$ in  Theorem \ref{T2}(1) can be estimated:
  \beg{align*} &\bb_{\vv,f}:= \sup_D\bigg\{2K_{x,R} - 2 f Lf +\Big(6+ \ff{(1+\vv)^2n}\vv\Big)|\nn f|^2\bigg\}\\
&\le \sup_{r\in [0,R]} \bigg\{2K_{x,R} + \ff{\pi}{2R} \sin\Big(\ff{\pi r} R\Big) \ss{K_{x,R}(n-1)}\coth\Big(\ss{K_{x,R}/(n-1)}\, r\Big)\\
&\qquad\qquad\qquad + \ff{\pi^2}{2R^2} \cos^2\Big(\ff{\pi r}{2R}\Big)+\Big(6+\ff{(1+\vv)^2n }\vv \ff{\pi^2}{4R^2}\Big)\sin^2\Big(\ff{\pi r}{2R}\Big)\bigg\}.\end{align*}
Noting that $\sin r\le 1\land r,\ \sin^2r+\cos^2 r=1$ and $ \coth r\le 1+r^{-1}$ for $r>0$, this implies
\beg{align*}  \bb_{\vv,f}&\le 2K_{x,R} +\ff{\pi}{2R} \Big(\ss{K_{x,R}(n-1)} +\ff{\pi(n-1)}R\Big) +\Big(6+\ff{n(1+\vv)^2}\vv\Big)\ff{\pi^2}{4R^2}\\
&= 2 K_{x,R} +\ff{\pi}{2R} \ss{K_{x,R}(n-1)} +\ff{\pi^2}{4R^2}\Big[4+   \big(\vv^{-1}(1+\vv)^2+2\big)n \Big].\end{align*} 
So, by Theorem \ref{T2}(1), \eqref{G3} holds for $\bb_{\vv,f}$ replaced by $\beta_{\vv, R}$ in \eqref{GH}.

Similarly,
\beg{align*} &\tt\bb_{\aa,f}:=  \sup_D\bigg\{\Big(6+\ff{n\aa^2}{\aa-1} \Big) |\nn f|^2 -2fLf -\ff{2K_{x,R}}{\aa-1} f^2\bigg\}\\
&\le  \Big(6+\ff{n\aa^2}{\aa-1} \Big)\ff{\pi^2}{4R^2} +\sup_{r\in [0,R]}\l\{ \ff{\pi}{2R} \sin\Big(\ff{\pi r} R\Big) \ss{K_{x,R}(n-1)}\coth\Big(\ss{K_{x,R}/(n-1)}\, r\Big)\r\}+\ff{2K_{x,R}}{\aa-1}\\
&\le \ff{\pi^2}{2 R^2}\Big(2+n+\ff{n\aa^2}{2(\aa-1)}\Big)+ \ff{\pi}{2R}\ss{K_{x,R}(n-1)}+\ff{2K_{x,R}}{\aa-1}.\end{align*}
So,  by Theorem \ref{T2}(2), \eqref{D4} holds for $\tt\bb_{\aa,f}$ replaced by $\tt\bb_{\aa,R}$ in \eqref{GH'}.

(b) When cut$(x)\ne\emptyset,$ noting that $N\rr_x|_{\pp M}\le 0$ by the convexity of  $\pp M$,
the It\^o  formula for $\rr_x(X_t)$ due to \cite{K} implies
\begin{align*}  \d \rr_x(X_t)&\le L \rr_x(X_t) \d t +\ss{2}\,\d b_t\\
&\le \ss{K_{x,R}(n-1)} \coth\Big(\ss{K_{x,R}/(n-1)}\,\rr_x(X_t)\Big)\d t+\ss 2 \,\d b_t,\ \ t\le \tau_D,\end{align*}
where $b_t$ is the one-dimensional Brownian motion. With this inequality and the fact that $\cos r$ is smooth and decreasing in $r\in [0,\ff \pi 2]$,
 the argument in the proof of Theorem \ref{T2} still works for the present choice of $f$, so that the proof
 is finished as in the above step (a).

\end{proof}


\end{document}

\beg{thm}\label{T1'} Let $\pp M$ be non-convex, and let \eqref{C} hold for some   $K\in C_b(M)$. Assume that there exists   $\phi\in \D$  such that
$\|Z\phi\|_\infty<\infty$. Then for any  constant $\aa>\|\phi\|_\infty^2,$ we have 
\beq\label{KK} \bb_{\aa,\phi}:=\inf_M \ff{2(K\phi^2+\phi L\phi)}{\aa-\phi^2}>-\infty.\end{equation}
 Moreover,   for   any $\bb \in (-\infty, \bb_{\aa,\phi}]$, any $t>0$ and $\ell\in C_b^1([0,t])$ with $\ell_s'<0,\ell_0=1$ and $\ell_t=0$,   
\beq\label{A100} \begin{split}
&    \ff{\phi^2|\nabla u_t|^2}{u_t} -   \aa L u_t \\
& \le u_t \int_0^t \e^{\bb s} \big(\bb \ell_s^2 +2\ell_s\ell_s'\big)^2 \bigg(\ff{n\aa^2 }{8}  + \ff{\aa^2   \|\nn\log\phi\|_\infty^2\ell_s}{8(\aa-\|\phi\|_\infty^2)|\ell_s'|}\bigg)\,\d s.\end{split}
\end{equation}
 
  \end{thm}

\beg{proof}   We may assume that $\inf u_0>0$. 
Since $\phi\in C_b^2(M)$, $\|Z\phi\|_\infty<\infty$  implies condition (3.2.15) in \cite{Wbook}, so that \cite[Theorem 3.2.7]{Wbook}  for $f=u_0$ implies the boundedness 
of $|\nn u|$ on $[0,t]\times M$. So, $\ff{|\nn u|^2}{u}$ is bounded on $[0,t]\times M.$

By $\phi\in \D$ and \eqref{N}, we obtain
\beq\label{NN} N\Big(\ff{\phi^2 |\nn u_s|^2}{u_s}\Big)\Big|_{\pp M}  = \ff{2\phi^2}{u_s}  \big(\II(\nn u_s,\nn u_s) + (N \log \phi)|\nn u_s|^2\big)\big|_{\pp M}\ge 0.\end{equation}
By \eqref{C} and the display after \eqref{N2}, we obtain
\beg{align*}&(L+\pp_s) \ff{\phi^2 |\nn u_{t-s}|^2} {u_{t-s}} = \ff{2\phi^2}{u_{t-s}}\Big(\big\|\Hess_{u_{t-s}}- \nn u_{t-s}\otimes\nn \log u_{t-s} \big\|_{HS}^2 +\Ric_Z(\nn u_{t-s},\nn u_{t-s})\Big)\\
&\qquad +\ff{(L\phi^2) |\nn u_{t-s}|^2}{u_{t-s}} +\ff{4\phi^2}{u_{t-s}}\Hess_{u_{t-s}}(\nn u_{t-s}, \nn\log\phi) -\ff{2\phi^2|\nn u_{t-s}|^2}{u_{t-s}} \<\nn \log u_{t-s},\nn \log\phi\>\\
&= \ff{2\phi^2}{u_{t-s}}\Big(\Big\|\Hess_{u_{t-s}}- \nn u_{t-s}\otimes  \nn \log \ff{u_{t-s}}\phi  \Big\|_{HS}^2    +   \Ric_Z(\nn u_{t-s},\nn u_{t-s})\Big) 
  + \ff{|\nn u_{t-s}|^2(L\phi^2- 2|\nn \phi|^2)}{u_{t-s}}\\
&\ge \ff{2\phi^2}{u_{t-s}} \Big[\ff 1 m \Big(L u_{t-s} - \Big\<\nn u_{t-s},\nn   \log \ff{u_{t-s}}\phi \Big\>- Z u_{t-s} \Big)^2+  \Ric_Z(\nn u_{t-s},\nn u_{t-s})\Big] + \ff{2(\phi L\phi)|\nn u_{t-s}|^2}{u_{t-s}}\\
&\ge \ff{2\phi^2}{  u_{t-s}} \Big[\ff 1 n \Big(L u_{t-s} - \Big\<\nn u_{t-s},\nn   \log \ff{u_{t-s}}\phi \Big\>  \Big)^2+ \Ric_Z^{(n-m)}(\nn u_{t-s},\nn u_{t-s})\Big]  + \ff{2(\phi L\phi)|\nn u_{t-s}|^2}{u_{t-s}}\\
&\ge \ff{2\phi^2}{n u_{t-s}} \Big(L u_{t-s} - \Big\<\nn u_{t-s}, \nn \log \ff{u_{t-s}}\phi \Big\>  \Big)^2+    \ff{2(K\phi^2+\phi L\phi)|\nn u_{t-s}|^2}{u_{t-s}}.\end{align*} 
Combining this with \eqref{P1}, \eqref{NN}, $N Lu_{t-s}|_{\pp M}=0$ and $(L+\pp_s) Lu_{t-s}=0$ for $s\in [0,t)$, and applying It\^o's formula to \eqref{SDE}, we derive 
\beg{align*} & \d\bigg\{\e^{\bb s} \ell_s^2 \Big(\ff{\phi^2 |\nn u_{t-s}|^2} {u_{t-s}}-\aa L u_{t-s}\Big)(X_s)\bigg\} \\
&\overset{m}{=} \e^{\bb s}\bigg\{\ell_s^2(L+\pp_s) \ff{\phi^2 |\nn u_{t-s}|^2} {u_{t-s}} + \big(\bb \ell_s^2 + 2\ell_s\ell_s'\big)  \Big(\ff{\phi^2 |\nn u_{t-s}|^2} {u_{t-s}}-\aa L u_{t-s}\Big) \bigg\}(X_s)\,\d s\\
&\ge  \e^{\bb s}\bigg\{\ff{2\phi^2}{n u_{t-s}}\ell_s^2 \Big(L u_{t-s} - \Big\<\nn u_{t-s}, \nn \log \ff{u_{t-s}}\phi \Big\>  \Big)^2\\
&\qquad \qquad \ -\aa (\bb \ell_s^2+2\ell_s\ell_s')  \Big(L u_{t-s} 
- \Big\<\nn u_{t-s}, \nn \log \ff{u_{t-s}}\phi \Big\>  \Big) \\
&\qquad\qquad  \  -\aa (\bb \ell_s^2+2\ell_s\ell_s') \Big\<\nn u_{t-s},  \nn \log \ff{u_{t-s}}\phi \Big\> \\
&\qquad\qquad  \ + \Big[ (\bb \ell_s^2+2\ell_s\ell_s')\phi^2+ 2(K\phi^2+\phi L\phi)\ell_s^2\Big]\ff{|\nn u_{t-s}|^2}{u_{t-s}} 
 \bigg\}(X_s)\,\d s\\
 &\ge  \e^{\bb s}\bigg\{- \ff{n\aa^2 (\bb \ell_s +2 \ell_s')^2}{8\phi^2} u_{t-s} 
   +\big[2 (K\phi^2+\phi L\phi)-\bb(\aa-\phi^2) \big]\ell_s^2 \ff{|\nn u_{t-s}|^2}{u_{t-s}}\\
 &\qquad\qquad  \   -2 \ell_s\ell_s'(\aa -\phi^2)    \ff{|\nn u_{t-s}|^2}{u_{t-s}} -\aa (\bb \ell_s^2 +2\ell_s \ell_s') |\nn u_{t-s}|\cdot|\nn\log\phi|\bigg\} (X_s)\,\d s.  \end{align*} 
Since $\ell_s\ell_s'<0$, $\phi\ge 1$ and  $\bb\le \beta_{\aa,\phi}$  yields 
$$   2(K\phi^2 +\phi L\phi) -  \bb (\aa-\phi^2)  \ge 0,$$
this implies 
\beg{align*} & \d\bigg\{\e^{\bb s} \ell_s^2 \Big(\ff{\phi^2 |\nn u_{t-s}|^2} {u_{t-s}}-\aa L u_{t-s}\Big)(X_s)\bigg\} \\ &\ge   
-  \e^{\bb s}  (\bb\ell_s+2\ell_s')^2 \bigg(\ff{n\aa^2 }{8}  + \ff{\aa^2 \|\nn\log\phi\|_\infty^2\ell_s}{8(\aa-\|\phi\|_\infty^2)|\ell_s'|}\bigg)u_{t-s}(X_s)\,\d s. \end{align*} 
 Combining this with
 the boundedness of $\ff{|\nn u_{t-s}|^2}{u_{t-s}}$ as explained in the beginning of the proof,  $\ell_0=1, \ell_t=0,$ and  that $u_t$ and $L u_t$ are  bounded with
 $$\E^x u_{t-s}(X_s)= u_t(x),\ \ \E^x Lu_{t-s}(X_s)= Lu_t(x),$$ 
 we finish the proof. 
\end{proof}